\documentclass[10pt]{article}
\usepackage{amssymb}
\usepackage{amsmath}
\usepackage{amsthm}
\usepackage{amsfonts}

\usepackage{a4wide}
\usepackage{longtable}
\usepackage{multirow}

\usepackage[latin1]{inputenc}
\usepackage[english]{babel}
\usepackage[T1]{fontenc}
\usepackage{amssymb,amsmath,amsthm,amstext,amsfonts,latexsym}

\usepackage{pgf,tikz}
\usepackage{mathrsfs}
\usetikzlibrary{arrows}

\newtheorem{lemma}{Lemma}[section]
\newtheorem{proposition}{Proposition}[section]
\newtheorem{theorem}{Theorem}[section]
\newtheorem{corollary}{Corollary}[theorem]
\newtheorem{conjecture}{Conjecture}[section]

\theoremstyle{definition}

\theoremstyle{remark}
\newtheorem{remark}{Remark}[section]

\usepackage[all]{xy}
\input xy
\xyoption {all}
\usepackage[pdftex,             %
    bookmarks=true,             
    hypertexnames=false,        
    bookmarksnumbered=true,     
    pdfpagemode=None,           
    pdfstartview=FitH,          
    pdfpagelayout=SinglePage,   
    colorlinks=true,            
    urlcolor=blue,              
    citecolor=blue,             
    pdfborder={0 0 0}           
    ]{hyperref}
    
\newcommand{\rank}{\operatorname{rank}\,}
\date{ }

\begin{document}

\title{ On a conjecture of Lemmermeyer }

\author{S. AOUISSI, M. TALBI,  M. C. ISMAILI and A. AZIZI}
\maketitle

\medskip
\noindent
\textbf{Abstract:}
Let $p\equiv 1\,(\mathrm{mod}\,3)$ be a prime
and denote by $\zeta_3$ a primitive third root of unity.
Recently, Lemmermeyer presented a conjecture about $3$-class groups
of pure cubic fields $L=\mathbb{Q}(\sqrt[3]{p})$
and of their normal closures $\mathrm{k}=\mathbb{Q}(\sqrt[3]{p},\zeta_3)$.
The purpose of this paper is to prove Lemmermeyer's conjecture.

\bigskip
\noindent{\bf Keywords:} {Pure cubic fields, Galois closure, $3$-class groups, abelian type invariants. \\ }
\bigskip
\noindent{\bf Mathematics Subject Classification 2010:} {11R11, 11R16, 11R20, 11R27, 11R29, 11R37.}
\bigskip

\section{Introduction}
\label{sec:Introd}

Let $L=\mathbb{Q}(\sqrt[3]{d})$ be a pure cubic field,
where $d>1$ is a cubefree positive integer,
$\mathrm{k}=\mathbb{Q}(\sqrt[3]{d},\zeta_3)$ be its normal closure,
and $C_{\mathrm{k},3}$ be the $3$-component of the class group of $\mathrm{k}$. \\
In a collection of unsolved problems,
Lemmermeyer proposed a conjecture for the special pure cubic field
$\mathbb{Q}(\sqrt[3]{p})$, where $p\equiv 1\,(\mathrm{mod}\,3)$ is a prime number
\cite[Conjecture 5, \S\ 1.10, p. 44]{FLEM}.
This conjecture gives a necessary and sufficient condition for the $3$-class group $C_{\mathrm{k},3}$
to be isomorphic to either $\mathbb{Z}/3\mathbb{Z}$ or $\mathbb{Z}/3\mathbb{Z}\times\mathbb{Z}/3\mathbb{Z}$, 
and thus specifies the $\rank$ of $C_{\mathrm{k},3}$ as follows:


\begin{conjecture}
\label{Conj1}
Let $L=\mathbb{Q}(\sqrt[3]{p})$ be a pure cubic field,
where $p$ is a prime number such that $p\equiv 1\,(\mathrm{mod}\,3)$,
and $\mathrm{k}=\mathbb{Q}(\sqrt[3]{p},\zeta_3)$ be its normal closure.
Let $C_{L,3}$ (resp. $C_{\mathrm{k},3}$) be the $3$-component of the class group of $L$ (resp. $\mathrm{k}$).
Then:
\begin{itemize}
\item[1)] $C_{L,3}$ is a cyclic $3$-group,
and if it contains a cyclic subgroup of order $9$, then $p\equiv 1\,(\mathrm{mod}\,9)$.
\item[2)] If $p\equiv 4,7\,(\mathrm{mod}\,9)$, then:
$$
C_{\mathrm{k},3}\simeq\left\{
   \begin{array}{@{} l c @{}}
      \mathbb{Z}/3\mathbb{Z}     & \text{ if~~} \left(\frac{3}{p}\right)_3\neq 1, \\
      (\mathbb{Z}/3\mathbb{Z})^2 & \text{ if~~} \left(\frac{3}{p}\right)_3=1.
   \end{array}\right.
$$
\item [3)] If $p\equiv 1\,(\mathrm{mod}\,9)$,
then $\rank C_{\mathrm{k},3}\in\{1,2\}$, independently of the value of $\left(\frac{3}{p}\right)_3$.

\end{itemize}
Here $\left(\frac{.}{p} \right)_3$ is the cubic residue symbol.
\end{conjecture}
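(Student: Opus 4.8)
\medskip
\noindent\textbf{Proof strategy.}

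The plan is to work in the normal closure, reduce to a cyclic cubic Kummer extension, and exploit the full $S_3$-module structure of the $3$-class group. Put $F=\mathbb{Q}(\zeta_3)$; then $\mathrm{k}=F(\sqrt[3]{p})$ is cyclic of degree $3$ over $F$, and $\mathrm{Gal}(\mathrm{k}/\mathbb{Q})\cong S_3$ is generated by $\sigma$ with $\langle\sigma\rangle=\mathrm{Gal}(\mathrm{k}/F)$ and by complex conjugation $\tau$ with $\langle\tau\rangle=\mathrm{Gal}(\mathrm{k}/L)$. As $3$ is odd, $C_{\mathrm{k},3}=C_{\mathrm{k},3}^{+}\oplus C_{\mathrm{k},3}^{-}$ decomposes into $\tau$-eigenspaces, and the norm and lift maps identify $C_{\mathrm{k},3}^{+}\simeq C_{L,3}$; in particular $C_{L,3}$ is a direct summand of $C_{\mathrm{k},3}$, so the cyclicity in Part~1 will follow once one knows $\rank C_{L,3}\le 1$, which is classical for $\mathbb{Q}(\sqrt[3]{p})$. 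I first record the ramification in $\mathrm{k}/F$: since $p\equiv1\,(\mathrm{mod}\,3)$, $p=\pi\overline{\pi}$ splits in $F$ and $\pi,\overline{\pi}$ are totally ramified in $\mathrm{k}/F$, while the prime $\lambda=(1-\zeta_3)$ over $3$ ramifies in $\mathrm{k}/F$ exactly when $p\not\equiv\pm1\,(\mathrm{mod}\,9)$, i.e.\ (given $p\equiv1\,(\mathrm{mod}\,3)$) when $p\equiv4,7\,(\mathrm{mod}\,9)$. Hence the number $t$ of primes of $F$ ramifying in $\mathrm{k}$ is $3$ if $p\equiv4,7\,(\mathrm{mod}\,9)$ and $2$ if $p\equiv1\,(\mathrm{mod}\,9)$.

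Next I would apply Chevalley's ambiguous class number formula to $\mathrm{k}/F$. Using $h_F=1$ and that the unit group of $F$ is $\langle\zeta_6\rangle$ with $E_F/E_F^{3}$ cyclic of order $3$ generated by $\zeta_3$, it gives
\[
\bigl|C_{\mathrm{k},3}^{\langle\sigma\rangle}\bigr|=\bigl|C_{\mathrm{k},3}/(1-\sigma)C_{\mathrm{k},3}\bigr|=\frac{3^{\,t-1}}{q},\qquad q:=\bigl[E_F:E_F\cap N_{\mathrm{k}/F}(\mathrm{k}^{\times})\bigr]\in\{1,3\},
\]
with $q=3$ precisely when $\zeta_3\notin N_{\mathrm{k}/F}(\mathrm{k}^{\times})$. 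By the Hasse norm theorem this is a purely local matter at $\pi,\overline{\pi},\lambda$; computing the norm residue symbols at $\pi,\overline{\pi}$ via local class field theory, treating the wild prime $\lambda$ through the residue of $p$ modulo $9$, and organizing the resulting conditions with cubic reciprocity, one should find that $\zeta_3\notin N_{\mathrm{k}/F}(\mathrm{k}^{\times})$ happens exactly when $p\equiv4,7\,(\mathrm{mod}\,9)$ and $\left(\frac{3}{p}\right)_3\neq1$. Consequently $\bigl|C_{\mathrm{k},3}^{\langle\sigma\rangle}\bigr|=3$ when $p\equiv1\,(\mathrm{mod}\,9)$, and also when $p\equiv4,7\,(\mathrm{mod}\,9)$ with $\left(\frac{3}{p}\right)_3\neq1$; whereas $\bigl|C_{\mathrm{k},3}^{\langle\sigma\rangle}\bigr|=9$ when $p\equiv4,7\,(\mathrm{mod}\,9)$ with $\left(\frac{3}{p}\right)_3=1$.

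It remains to pass from the ambiguous part to $C_{\mathrm{k},3}$ itself. From the displayed identity the genus group has $\mathbb{F}_3$-rank $g=t-1-v_3(q)\in\{1,2\}$, so $C_{\mathrm{k},3}\neq 1$ and $\rank C_{\mathrm{k},3}\ge g\ge 1$; conversely, for $\mathrm{k}=\mathbb{Q}(\zeta_3,\sqrt[3]{p})$ the ``non-genus'' part of $C_{\mathrm{k},3}$ contributes at most one to the $3$-rank — only one rational prime splits in $F$ and ramifies in $\mathrm{k}$ — and feeding this, together with the cyclicity of $C_{L,3}=C_{\mathrm{k},3}^{+}$ and the outer $\tau$-action on the $\sigma$-module $C_{\mathrm{k},3}$, into the structure theory of $3$-class groups of pure cubic fields (Barrucand--Cohn, Gerth) yields $\rank C_{\mathrm{k},3}\le 2$. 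This already gives Part~3 (where $g=1$) and the rank dichotomy in Part~2. For the precise isomorphism type in Part~2: when $p\equiv4,7\,(\mathrm{mod}\,9)$ the rational prime $3$ is totally ramified in $L/\mathbb{Q}$, so $3\,\|\,h_L$ and $C_{L,3}=C_{\mathrm{k},3}^{+}\simeq\mathbb{Z}/3\mathbb{Z}$; combined with $\rank C_{\mathrm{k},3}\le 2$ and $\bigl|C_{\mathrm{k},3}^{\langle\sigma\rangle}\bigr|\in\{3,9\}$ this forces $C_{\mathrm{k},3}$ elementary abelian, equal to $\mathbb{Z}/3\mathbb{Z}$ if $\left(\frac{3}{p}\right)_3\neq1$ and to $(\mathbb{Z}/3\mathbb{Z})^2$ if $\left(\frac{3}{p}\right)_3=1$. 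Finally the remaining assertion of Part~1 follows from the same source: a cyclic subgroup of order $9$ in $C_{L,3}$ would give $9\mid h_L$, incompatible with $3$ being totally ramified in $L/\mathbb{Q}$, hence forcing $p\equiv1\,(\mathrm{mod}\,9)$.

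I expect the decisive step to be the explicit evaluation of the unit norm index $q$ — translating the global condition ``$\zeta_3\in N_{\mathrm{k}/F}(\mathrm{k}^{\times})$'' into the single cubic residue symbol $\left(\frac{3}{p}\right)_3$ — which requires a careful analysis of the wildly ramified prime $\lambda$ alongside cubic reciprocity; a secondary difficulty is controlling the non-genus part so as to exclude $3$-rank $3$ and a cyclic factor of order $9$ in $C_{\mathrm{k},3}$, which lie beyond plain genus theory and must come from the finer arithmetic of the pure cubic field $L$.
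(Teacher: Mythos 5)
Your key step fails. You propose to separate the two cases of Part~2 through the unit--norm index $q=\bigl[E_F:E_F\cap N_{\mathrm{k}/F}(\mathrm{k}^{\times})\bigr]$, claiming that $\zeta_3\notin N_{\mathrm{k}/F}(\mathrm{k}^{\times})$ exactly when $p\equiv 4,7\pmod 9$ \emph{and} $\left(\frac{3}{p}\right)_3\neq 1$. That is false. By the Hasse norm theorem the question is local, and at the tamely ramified prime $\pi_1\mid p$ the norm residue symbol of the unit $\zeta_3$ is its cubic residue character, $\left(\frac{\zeta_3}{\pi_1}\right)_3=\zeta_3^{(N\pi_1-1)/3}=\zeta_3^{(p-1)/3}$, which is nontrivial precisely when $p\not\equiv 1\pmod 9$. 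Hence for \emph{every} $p\equiv 4,7\pmod 9$ one has $q=3$, and with $t=3$ the ambiguous class number is $3^{t-1}/q=3$, independently of $\left(\frac{3}{p}\right)_3$; likewise it equals $3$ for $p\equiv 1\pmod 9$ (this is the paper's Lemma \ref{11} via equation (\ref{eq2})). The value $\bigl|C_{\mathrm{k},3}^{\langle\sigma\rangle}\bigr|=9$ on which your Part~2 dichotomy rests never occurs, so plain genus theory cannot distinguish $\mathbb{Z}/3\mathbb{Z}$ from $(\mathbb{Z}/3\mathbb{Z})^2$ here: the cubic symbol $\left(\frac{3}{p}\right)_3$ simply does not enter at the level of the ambiguous class group.

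Where the symbol actually enters, and what your sketch is missing, is elsewhere. The paper first gets $|C_{\mathrm{k},3}|=3^{s}$ and $\rank C_{\mathrm{k},3}\le 2$ from the filtration by $C_{\mathrm{k},3}^{(1-\sigma)^{i}}$ using $\bigl|C_{\mathrm{k},3}^{(\sigma)}\bigr|=3$ (Lemma \ref{32}); your appeal to ``the non-genus part contributes at most one'' gestures at this but is not an argument. It then separates the two cases of Part~2 by the Barrucand--Cohn subfield-unit index $u$, via $h_{\mathrm{k},3}=\frac{u}{3}h_{L,3}^{2}$ (Lemma \ref{33}), combined with the Ismaili--El Mesaoudi criterion tying $\left(\frac{3}{p}\right)_3$ to $u$ and to the exact divisibility $3\,\Vert\,h_L$ (Lemma \ref{lem:symb and u}). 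Your disposal of the second claim of Part~1 --- ``a cyclic subgroup of order $9$ gives $9\mid h_L$, incompatible with $3$ being totally ramified in $L/\mathbb{Q}$'' --- is a non sequitur: total ramification of $3$ imposes no bound on $h_L$, and proving $3\,\Vert\,h_L$ for $p\equiv 4,7\pmod 9$ is precisely the hard content of the conjecture. The paper handles it by assuming $9\mid h_L$ with $\left(\frac{3}{p}\right)_3=1$ and running an Artin-symbol computation in $\mathrm{k}_0(\sqrt[3]{\zeta_3},\sqrt[3]{1-\zeta_3})$ to force $\left(\frac{\zeta_3}{\pi_i}\right)_3=1$, hence $\left(\frac{\zeta_3}{p}\right)_3=1$ and $p\equiv 1\pmod 9$ (Lemmas \ref{14s} and \ref{14ss}), a contradiction. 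None of this is recoverable from your norm-index computation, so the proposal has a genuine gap in Part~2 and in the second half of Part~1; only the rank statement of Part~3 survives, and even that needs the filtration argument you left implicit.
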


In fact, Conjecture \ref{Conj1} for $p\equiv 4,7\,(\mathrm{mod}\,9)$ was first expressed in 1970 by Barrucand and Cohn
\cite[\S\ 8, p. 19]{BC2},
partially proved in 1976 by Barrucand, H. C. Williams and Baniuk
\cite[\S\ 7, Thm. 1, p. 321, and \S\ 8, Cnj. 1, p. 322]{BWB},
and mentioned again in 1982 by H. C. Williams
\cite[\S\ 6, p. 273]{HCW1982}.
Conjecture \ref{Conj1} for $p\equiv 1\,(\mathrm{mod}\,9)$
was proved partially in 2005 by Gerth
\cite[Formulas p. 474, and Case 4, pp. 475--476]{Ge2005},
who also pointed out that Conjecture \ref{Conj1} for $p\equiv 4,7\,(\mathrm{mod}\,9)$ is still an open problem.

Based on results concerning the $3$-class group $C_{\mathrm{k},3}$ in \S\ \ref{ss:Preliminaries},
we shall prove Conjecture \ref{Conj1} in \S\ \ref{ss:Conj1}.
It will be underpinned by numerical examples
obtained with the Computational Number Theory System PARI \cite{PARI} in \S\ \ref{NumExp}. Throughout this paper, we will use the following notations: 

\begin{itemize}
 \item  $p$ is a prime number such that $p\equiv 1 \pmod 3$;
 \item $L= \mathbb{Q}(\sqrt[3]{d})$ is a pure cubic field, where $d>1$ is a cube-free positive integer;
 \item $\mathrm{k}_0= \mathbb{Q}(\zeta_3)$, where $\zeta_3=e^{2i\pi/3}$ denotes a primitive third root of unity;
 \item $\mathrm{k}=\mathbb{Q}(\sqrt[3]{d},\zeta_3)$ is the normal closure of $L$;
 \item $\langle\tau\rangle=\operatorname{Gal}(\mathrm{k}/L)$ such that $\tau^2=id$, $\tau(\zeta_3)=\zeta_3^2$ and $\tau(\sqrt[3]{d})=\sqrt[3]{d}$;
 \item $\langle\sigma\rangle=\operatorname{Gal}(\mathrm{k}/\mathrm{k}_0)$ such that $\sigma^3=id$, $\sigma(\zeta_3)=\zeta_3$,
   $\sigma(\sqrt[3]{d})=\zeta_3\sqrt[3]{d}$ and $\tau\sigma=\sigma^2\tau$;
 \item $\lambda=1-\zeta_3$ and $\pi$ are prime elements of $\mathrm{k}_0$;
 \item $q^{\ast}=1$ or \(0\) according to whether \(\zeta_{3}\) is or is not norm of an element of \(\mathrm{k}\setminus\lbrace 0\rbrace\);
 \item $u$ denotes the index of the subgroup $E_0$ generated by the units of intermediate fields of the extension $\mathrm{k}/\mathbb{Q}$ in the group of units of $\mathrm{k}$;
 \item $\mathcal{N}_{\mathrm{k}/\mathrm{k}_{0}}$ denotes the norm of $\mathrm{k}$ on $\mathrm{k}_{0}$;
 \item \(t\) denotes the number of prime ideals ramified in \(\mathrm{k}/\mathrm{k}_{0}\);
 \item $\left(\frac{.}{p}\right)_3$ is the cubic residue symbol such that
$\left(\frac{c}{p}\right)_3=1$ $\Leftrightarrow$
$X^3\equiv c\,(\mathrm{mod}\,p)$ has a solution in $\mathbb{Z}$ $\Leftrightarrow$
$c^{(p-1)/3}\equiv 1\,(\mathrm{mod}\,p)$, where $c\in\mathbb{Z}$,
and $p$ is a prime number such that $p\nmid c$ and $p\equiv 1\,(\mathrm{mod}\,3)$;
 \item For an algebraic number field $F$:
  \begin{itemize}
  \item $\mathcal{O}_{F}$, $E_{F}$ : the ring of integers and the group of units of $F$;
  \item $C_{F,3}$, $\mathrm{F}_3^{(1)}$ : the $3$-class group and the Hilbert $3$-class field of $F$;
  \item $[\mathcal{I}]$ : the class of a fractional ideal $\mathcal{I}$ in the class group of $F$;
  \end{itemize}
\end{itemize}




\section{Proof of Conjecture \ref{Conj1}}
\label{sec:Proof}

\subsection{Preliminary results}
\label{ss:Preliminaries}

Let $d>1$ be a cubefree integer,
and let $L=\mathbb{Q}(\sqrt[3]{d})$ be the pure cubic field with radicand $d$.
We denote by $\zeta_3=(-1+i\sqrt{3})/2$ a primitive cube root of unity.
By $\mathrm{k}_0$ we denote the third cyclotomic field $\mathbb{Q}(\zeta_3)$,
by $\mathrm{k}=\mathbb{Q}(\sqrt[3]{d},\zeta_3)$ the normal closure of the pure cubic field $L$,
and by $C_{\mathrm{k},3}$ the $3$-component of the class group of $\mathrm{k}$.
Further, let $\langle\tau\rangle=\operatorname{Gal}\left(\mathrm{k}/L\right)$
and $\langle\sigma\rangle=\operatorname{Gal}\left(\mathrm{k}/\mathrm{k}_0\right)$.


The $3$-class group $C_{\mathrm{k},3}$ can be viewed as a $\mathbb{Z}_3[\zeta_3]$-module.
According to  \cite[\S\ 2, Lemma 2.1 and Lemma 2.2, p. 53]{GERTH2} we have:
$$ C_{\mathrm{k},3} \cong C_{\mathrm{k},3}^{+} \times C_{\mathrm{k},3}^{-} \ \ \text{and} \  \ C_{\mathrm{k},3}^{+}\cong C_{L,3}.$$
Define the $3$-group $C_{\mathrm{k},3}^{(1-\sigma)^{i}}$ for each $i\in\mathbb{N}$ by
$C_{\mathrm{k},3}^{(1-\sigma)^{i}}=
\lbrace \mathcal{A}^{(1-\sigma)^{i}}~\mid~ \mathcal{A} \in C_{\mathrm{k},3} \rbrace$.
Since we always have $(1-\zeta_3)^2\cdot\mathbb{Z}_3[\zeta_3]=3\cdot \mathbb{Z}_3[\zeta_3]$,
then for each $i \in \mathbb{N}$ we have
$C_{\mathrm{k},3}^{(1-\sigma)^{i+2}}=(C_{\mathrm{k},3}^{(1-\sigma)^{i}})^3$.
Consequently, we have the following equation for the $\rank$ of the group $C_{\mathrm{k},3}$:

 \begin{equation}
 \label{eq1}
 \rank C_{\mathrm{k},3}=\rank (C_{\mathrm{k},3}/C_{\mathrm{k},3}^{3})=
 \rank (C_{\mathrm{k},3}/C_{\mathrm{k},3}^{(1-\sigma)}) +
 \rank (C_{\mathrm{k},3}^{(1-\sigma)}/C_{\mathrm{k},3}^{(1-\sigma)^{2}}).
 \end{equation}

The fact that, for each $i\in\mathbb{N}$,
$C_{\mathrm{k},3}^{(1-\sigma)^{i}}/C_{\mathrm{k},3}^{(1-\sigma)^{i+1}}$
is a $\mathbb{Z}_3 [\langle\tau\rangle]$-module implies that:
$$\rank(C_{\mathrm{k},3}^{(1-\sigma)^{i}}/C_{\mathrm{k},3}^{(1-\sigma)^{i+1}})=
\rank (C_{\mathrm{k},3}^{(1-\sigma)^{i}}/C_{\mathrm{k},3}^{(1-\sigma)^{i+1}})^{+}
+
\rank (C_{\mathrm{k},3}^{(1-\sigma)^{i}}/C_{\mathrm{k},3}^{(1-\sigma)^{i+1}})^{-}.$$

So, for each $i\in \mathbb{N}$, we consider the homomorphism  $\varphi_{i}$  as follows:
\begin{eqnarray*}
\varphi_{i}: C_{\mathrm{k},3}^{(1-\sigma)^{i}}/C_{\mathrm{k},3}^{(1-\sigma)^{i+1}} & \longrightarrow & C_{\mathrm{k},3}^{(1-\sigma)^{i+1}}/C_{\mathrm{k},3}^{(1-\sigma)^{i+2}} \\
\mathcal{A} \ \bmod \ C_{\mathrm{k},3}^{(1-\sigma)^{i+1}} & \longmapsto & \mathcal{A}^{1-\sigma} \ \bmod \ C_{\mathrm{k},3}^{(1-\sigma)^{i+2}}
\end{eqnarray*}
where $\mathcal{A} \in C_{\mathrm{k},3}^{(1-\sigma)^{i}} $.
Since $C_{\mathrm{k},3}^{(1-\sigma)^{i}}/C_{\mathrm{k},3}^{(1-\sigma)^{i+1}}$
is an elementary abelian $3$-group for each $i\in \mathbb{N}$,
it can be viewed as vector space over $\mathbb{Z}_3$.
Thus $\varphi_{i}$ is a surjective vector space homomorphism.

Let  $\mathcal{B} \in C_{\mathrm{k},3}^{(1-\sigma)^{i}}/C_{\mathrm{k},3}^{(1-\sigma)^{i+1}}$,
then:
$$(\mathcal{B}^{1-\sigma})^{\tau}= (\mathcal{B}^{\tau})^{1-\sigma^2} = \mathcal{B}^{1-\sigma^2} =
\mathcal{B}^{3-(1-\sigma)-(1+\sigma+\sigma^2)} \equiv (\mathcal{B}^{1-\sigma})^{-1} \ \bmod \ C_{\mathrm{k},3}^{(1-\sigma)^{i+2}}$$
if $\mathcal{B} \in (C_{\mathrm{k},3}^{(1-\sigma)^{i}}/C_{\mathrm{k},3}^{(1-\sigma)^{i+1}})^{+}$,
and
$$(\mathcal{B}^{1-\sigma})^{\tau}= (\mathcal{B}^{\tau})^{1-\sigma^2} = (\mathcal{B}^{-1})^{1-\sigma^2} =
(\mathcal{B}^{-1})^{3-(1-\sigma)-(1+\sigma+\sigma^2)} \equiv \mathcal{B}^{1-\sigma} \ \bmod \ C_{\mathrm{k},3}^{(1-\sigma)^{i+2}}$$
if $\mathcal{B} \in (C_{\mathrm{k},3}^{(1-\sigma)^{i}}/C_{\mathrm{k},3}^{(1-\sigma)^{i+1}})^{-}$.
Thus for each $i$, the homomorphism  $\varphi_{i}$ maps $(C_{\mathrm{k},3}^{(1-\sigma)^{i}}/C_{\mathrm{k},3}^{(1-\sigma)^{i+1}})^{+}$
onto $(C_{\mathrm{k},3}^{(1-\sigma)^{i+1}}/C_{\mathrm{k},3}^{(1-\sigma)^{i+2}})^{-}$
and maps $(C_{\mathrm{k},3}^{(1-\sigma)^{i}}/C_{\mathrm{k},3}^{(1-\sigma)^{i+1}})^{-}$
onto $(C_{\mathrm{k},3}^{(1-\sigma)^{i+1}}/C_{\mathrm{k},3}^{(1-\sigma)^{i+2}})^{+}$.

Put $q^{*}=0$ or $1$ according to whether $\zeta_{3}$ is not or is norm of an element of $\mathrm{k}\backslash\{0\}$.
Let $t$ be the number of primes ramified in $\mathrm{k}/\mathrm{k}_{0}$
and $C_{\mathrm{k},3}^{(\sigma)} = \lbrace \mathcal{A} \in C_{\mathrm{k},3} / ~\mathcal{A}^{\sigma} = \mathcal{A} \rbrace$
be the ambiguous ideal class group of $\mathrm{k}/\mathrm{k}_{0} $,
where $\sigma$ is a generator of $\operatorname{Gal}\left(\mathrm{k}/\mathrm{k}_{0} \right)$.
Then, according to \cite[\S\ 5, pp 91-92]{GERTH1} we have
\begin{equation}
\label{eq2}
 |C_{\mathrm{k},3}^{(\sigma)}| = 3^{t-2+q^{*}}.
\end{equation}

If we denote by $C_{\mathrm{k}_0,3}$ the Sylow $3$-subgroup of the ideal class group of $\mathrm{k}_0$,
then $C_{\mathrm{k}_0,3}=\lbrace 1 \rbrace$,
and by the exact sequence :
$$1 \longrightarrow C_{\mathrm{k},3}^{(\sigma)}\longrightarrow C_{\mathrm{k},3}\overset{1-\sigma}{\longrightarrow} C_{\mathrm{k},3}
\longrightarrow C_{\mathrm{k},3}/C_{\mathrm{k},3}^{1-\sigma}\longrightarrow 1$$
we deduce that $$|C_{\mathrm{k},3}^{(\sigma)}|=| C_{\mathrm{k},3}/C_{\mathrm{k},3}^{1-\sigma}|.$$
The fact that $C_{\mathrm{k},3}^{(\sigma)}$ and $C_{\mathrm{k},3}/C_{\mathrm{k},3}^{1-\sigma}$ are elementary abelian $3$-groups implies that:
$$\rank C_{\mathrm{k},3}^{(\sigma)}= \rank( C_{\mathrm{k},3}/C_{\mathrm{k},3}^{1-\sigma}).$$

Next, we define the important \textit{index of subfield units} for the normal closure of pure cubic fields as follows:
Put $L'=\mathbb{Q}(\zeta_3\sqrt[3]{d})$ and $L''=\mathbb{Q}(\zeta_3^2\sqrt[3]{d})$.
Let $\mathcal{O}_{\mathrm{k}}$, $\mathcal{O}_{L},~\mathcal{O}_{L'},~\mathcal{O}_{L''}$ and $\mathcal{O}_{\mathrm{k}_0}$, respectively,
be the rings of integers of $\mathrm{k}$, $L$, $L'$, $L''$ and $\mathrm{k}_0$.
Let $E_{\mathrm{k},3}$ be the unit's group in $\mathcal{O}_{\mathrm{k},3},$
and let $E_{0}$ be the subgroup of $E_{\mathrm{k},3}$ generated by the units in the ring of integers
$\mathcal{O}_{L},~\mathcal{O}_{L'},~\mathcal{O}_{L''}~$ and $\mathcal{O}_{\mathrm{k}_0}$.
We let $u$ denote the index $[E_{\mathrm{k},3} : E_0]$.
According to \cite[\S\ 12, Theorem 12.1, p. 229]{B-C}, there are two possibilities, either $u=1$ or $u=3$.
\paragraph{}
To prove Conjecture \ref{Conj1}, we must employ the following Lemmas:

\begin{lemma}
\label{33}
Let $L= \mathbb{Q}(\sqrt[3]{d})$ be a pure cubic field, where
$d>1$ is a cubefree natural number, and $\mathrm{k}=\mathbb{Q}(\sqrt[3]{d},\zeta_3)$ be its normal closure.
Let $C_{L,3}$ (resp. $C_{\mathrm{k},3}$) be the $3$-component of the class group of $L$ (resp. $\mathrm{k}$),
$h_{L}$ the class number of $L$,
and $u$ the index of subfield units, defined as above. Then:
\begin{itemize}
\item[1)] $C_{\mathrm{k},3}\simeq\mathbb{Z}/3\mathbb{Z}\times\mathbb{Z}/3\mathbb{Z}$
$\Leftrightarrow$ ($3$ divides $h_{L}$ exactly and $u=3$).
\item[2)] $C_{L,3}\simeq C_{\mathrm{k},3}\simeq\mathbb{Z}/3\mathbb{Z}$
$\Leftrightarrow$ ($3$ divides $h_{L}$ exactly and $u=1$).
\end{itemize}
\end{lemma}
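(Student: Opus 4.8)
The plan is to derive Lemma \ref{33} from the classical class number relation for the $S_{3}$-extension $\mathrm{k}/\mathbb{Q}$,
$$
h_{\mathrm{k}}=\frac{u}{3}\,h_{L}^{2},
$$
which is \cite[\S\ 12, Theorem 12.1]{B-C} (the source already invoked for $u\in\{1,3\}$; it also follows from the Artin formalism applied to the factorization of $\zeta_{\mathrm{k}}$ over the subfields of $\mathrm{k}/\mathbb{Q}$ together with $h_{\mathrm{k}_{0}}=1$), combined with the decomposition $C_{\mathrm{k},3}\cong C_{\mathrm{k},3}^{+}\times C_{\mathrm{k},3}^{-}$ with $C_{\mathrm{k},3}^{+}\cong C_{L,3}$ recalled in \S\ \ref{ss:Preliminaries}. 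First I would localize at $3$: since $u$ is a power of $3$, the relation becomes $|C_{\mathrm{k},3}|=\frac{u}{3}|C_{L,3}|^{2}$, and since $|C_{\mathrm{k},3}|=|C_{L,3}|\cdot|C_{\mathrm{k},3}^{-}|$ by the direct product together with $C_{\mathrm{k},3}^{+}\cong C_{L,3}$, this reduces to the single identity on which the whole argument rests:
$$
|C_{\mathrm{k},3}^{-}|=\frac{u}{3}\,|C_{L,3}|.
$$

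For the implication ``$\Leftarrow$'' in both parts I would assume that $3$ divides $h_{L}$ exactly, so $|C_{L,3}|=3$ and hence $C_{\mathrm{k},3}^{+}\cong C_{L,3}\cong\mathbb{Z}/3\mathbb{Z}$; the identity above then gives $|C_{\mathrm{k},3}^{-}|=u$. If $u=1$ then $C_{\mathrm{k},3}^{-}$ is trivial, so $C_{L,3}\cong C_{\mathrm{k},3}\cong\mathbb{Z}/3\mathbb{Z}$, which is 2). If $u=3$ then $C_{\mathrm{k},3}^{-}$ has order $3$, hence $C_{\mathrm{k},3}^{-}\cong\mathbb{Z}/3\mathbb{Z}$, so $C_{\mathrm{k},3}\cong C_{\mathrm{k},3}^{+}\times C_{\mathrm{k},3}^{-}\cong(\mathbb{Z}/3\mathbb{Z})^{2}$, which is 1). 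Here the direct-product decomposition is exactly what forces the order-$9$ group to be elementary abelian rather than cyclic of order $9$.

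For ``$\Rightarrow$'': if $C_{\mathrm{k},3}\cong(\mathbb{Z}/3\mathbb{Z})^{2}$ then $|C_{\mathrm{k},3}|=9$, so the localized relation reads $u\,|C_{L,3}|^{2}=27$; since $|C_{L,3}|$ is a power of $3$ and $u\in\{1,3\}$, this forces $u=3$ and $|C_{L,3}|=3$, i.e.\ $3$ divides $h_{L}$ exactly and $u=3$. Similarly, if $C_{L,3}\cong C_{\mathrm{k},3}\cong\mathbb{Z}/3\mathbb{Z}$ then $|C_{L,3}|=3$ (so $3$ divides $h_{L}$ exactly) and $|C_{\mathrm{k},3}|=3$, whence $u=3|C_{\mathrm{k},3}|/|C_{L,3}|^{2}=1$. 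This settles both equivalences. The only nontrivial input is the class number relation $h_{\mathrm{k}}=\frac{u}{3}h_{L}^{2}$ itself, so that is where I expect the real weight (or the reliance on \cite{B-C}) to lie; once it is available, the remaining point to get right is merely that $u\in\{1,3\}$ is a $3$-power, so that dividing by $3$ survives localization and $C_{\mathrm{k},3}^{-}$ is pinned down as cyclic of order exactly $u$.
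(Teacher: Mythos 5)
Your proposal is correct and follows essentially the same route as the paper: the Barrucand--Cohn relation $|C_{\mathrm{k},3}|=\frac{u}{3}|C_{L,3}|^{2}$ combined with Gerth's decomposition $C_{\mathrm{k},3}\cong C_{\mathrm{k},3}^{+}\times C_{\mathrm{k},3}^{-}$ with $C_{\mathrm{k},3}^{+}\cong C_{L,3}$, your identity $|C_{\mathrm{k},3}^{-}|=\frac{u}{3}|C_{L,3}|$ being just a tidy repackaging of the paper's counting argument (your explicit remark that $u$ is a $3$-power, so the relation survives localization at $3$, is a point the paper leaves implicit).
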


\begin{proof}
$\\ 1)$ If $C_{\mathrm{k},3} \simeq \mathbb{Z}/3\mathbb{Z}\times\mathbb{Z}/3\mathbb{Z}$,
then $|C_{\mathrm{k},3}|=9$ and by \cite[\S\ 14, Theorem 14.1, p. 232]{B-C}
we have $|C_{\mathrm{k},3}|=\dfrac{u}{3}|C_{L,3}|^2=9$.
We deduce that $u=3$ and $|C_{L,3}|=3$,
since the other value for $u$, namely $1$, is not possible because otherwise $27$ would be a square. \\
Conversely, if $3$ divides $h_{L}$ exactly and $u=3$,
then $3^2$ divides the class number $h_{\mathrm{k}}$ of $\mathrm{k}$ exactly
and $|C_{\mathrm{k},3}|=9$.
By  \cite[\S\ 2, Lemma 2.2, p. 53]{GERTH2},
the group $C_{\mathrm{k},3}^{+}$ is cyclic of order $3$.
On the other hand, by  \cite[\S\ 2, Lemma 2.1, p. 53]{GERTH2},
$C_{\mathrm{k},3}\simeq C_{\mathrm{k},3}^{+}\times C_{\mathrm{k},3}^{-}$.
Thus we have $|C_{\mathrm{k},3}|= |C_{\mathrm{k},3}^{+}|\cdot | C_{\mathrm{k},3}^{-}| = 9$,
so $|C_{\mathrm{k},3}^{-}|=3 $ and $C_{\mathrm{k},3}^{-}$ is also a cyclic group of order $3$.
As $C_{\mathrm{k},3}$ is the direct product of two cyclic subgroups of order 3,
then $C_{\mathrm{k},3}\simeq \mathbb{Z}/3\mathbb{Z}\times\mathbb{Z}/3\mathbb{Z}$.
$\\ 2)$ We have the same proof as above.
\end{proof}

\begin{lemma}\label{14s}
Let $p$ be a prime number such that $p\equiv 1~(\bmod~3)$. Let $L= \mathbb{Q}(\sqrt[3]{p})$, and $\mathrm{k}=\mathbb{Q}(\sqrt[3]{p},\zeta_3)$ be its normal closure. Then, $p=\pi_1\pi_2$,
with $\pi_1$ and $\pi_2$ are two primes of $\mathrm{k}_{0}$ such that
$\pi_2=\pi_1^{\tau}$ and $\pi_1 \equiv \pi_2 \equiv 1 ~(\bmod~3\mathcal{O}_{\mathrm{k}_0})$, where $\langle\tau\rangle=\operatorname{Gal}\left(\mathrm{k}/L\right)$
. Furthermore: 
$$ [ \left(\frac{\zeta_3}{\pi_1} \right)_3= \left(\frac{\zeta_3}{\pi_2} \right)_3 = 1 ] \Longleftrightarrow 
 \left(\frac{\zeta_3}{p} \right)_3=1,$$
 where $\left(\frac{.}{p} \right)_3$ is the cubic residue symbol. 
\end{lemma}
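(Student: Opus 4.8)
The plan is to treat the two assertions of the lemma separately. \emph{First} I would establish the decomposition $p=\pi_1\pi_2$ with the stated normalization. Since $p\equiv 1\,(\mathrm{mod}\,3)$ and $p\neq 3$, the prime $p$ splits in $\mathrm{k}_0=\mathbb{Q}(\zeta_3)$, say $p\mathcal{O}_{\mathrm{k}_0}=\mathfrak{p}_1\mathfrak{p}_2$ with $\mathfrak{p}_1\neq\mathfrak{p}_2$. Because $\tau$ fixes $L$ pointwise and sends $\zeta_3\mapsto\zeta_3^2$, its restriction to $\mathrm{k}_0$ is complex conjugation, which generates $\operatorname{Gal}(\mathrm{k}_0/\mathbb{Q})$; hence $\mathfrak{p}_2=\mathfrak{p}_1^{\tau}$. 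Now $\mathcal{O}_{\mathrm{k}_0}$ is a PID with unit group $\mu_6=\langle-\zeta_3\rangle$, and the six units map bijectively onto $(\mathcal{O}_{\mathrm{k}_0}/3\mathcal{O}_{\mathrm{k}_0})^{\times}$ (both groups have order $6$, and the kernel is trivial since no nontrivial root of unity is $\equiv 1\,(\mathrm{mod}\,3\mathcal{O}_{\mathrm{k}_0})$, using $3=-\zeta_3^2\lambda^2$); therefore every ideal of $\mathcal{O}_{\mathrm{k}_0}$ coprime to $\lambda$ has a unique \emph{primary} generator, namely one congruent to $1$ modulo $3\mathcal{O}_{\mathrm{k}_0}$. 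Let $\pi_1$ be the primary generator of $\mathfrak{p}_1$ and set $\pi_2:=\pi_1^{\tau}$. Then $\pi_2$ generates $\mathfrak{p}_2$, and $\pi_2\equiv 1^{\tau}\equiv 1\,(\mathrm{mod}\,3\mathcal{O}_{\mathrm{k}_0})$ since $\tau$ fixes $3$; moreover $\pi_1\pi_2=\pi_1\pi_1^{\tau}=\mathcal{N}_{\mathrm{k}_0/\mathbb{Q}}(\pi_1)$ is a positive rational integer generating $\mathcal{N}_{\mathrm{k}_0/\mathbb{Q}}(\mathfrak{p}_1)=p\mathbb{Z}$, so $\pi_1\pi_2=p$ on the nose. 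This gives the first assertion.

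\emph{Next}, for the ``furthermore'' part I would first observe that the two symbols on the left are always equal. The cubic power-residue symbol is Galois-equivariant: for any $\rho\in\operatorname{Gal}(\mathrm{k}_0/\mathbb{Q})$ and any prime $\mathfrak{q}$ of $\mathrm{k}_0$ with $\mathfrak{q}\nmid\alpha$ one has $\left(\frac{\rho(\alpha)}{\rho(\mathfrak{q})}\right)_3=\rho\!\left(\left(\frac{\alpha}{\mathfrak{q}}\right)_3\right)$, immediately from the defining congruence $\alpha^{(\mathcal{N}\mathfrak{q}-1)/3}\equiv\left(\frac{\alpha}{\mathfrak{q}}\right)_3\,(\mathrm{mod}\,\mathfrak{q})$ together with $\mathcal{N}(\rho\mathfrak{q})=\mathcal{N}\mathfrak{q}$ (note also that the symbol $\left(\frac{\zeta_3}{\pi_i}\right)_3$ depends only on the ideal $\mathfrak{p}_i$, every unit having norm $1$). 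Applying this with $\rho=\tau$ and $\mathfrak{q}=\mathfrak{p}_1$, and using $\tau(\zeta_3)=\zeta_3^2$ as well as $\tau(\omega)=\omega^2$ for every cube root of unity $\omega$, I obtain $\left(\frac{\zeta_3}{\pi_2}\right)_3^{2}=\left(\frac{\zeta_3}{\pi_1}\right)_3^{2}$; since squaring permutes the cube roots of unity, this forces $\left(\frac{\zeta_3}{\pi_1}\right)_3=\left(\frac{\zeta_3}{\pi_2}\right)_3$. Hence the bracketed condition on the left is equivalent to the single statement $\left(\frac{\zeta_3}{\pi_1}\right)_3=1$.

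Finally I would match this against $\left(\frac{\zeta_3}{p}\right)_3=1$, reading the latter as the natural multiplicative (Jacobi-type) extension of the cubic symbol to the modulus $p\mathcal{O}_{\mathrm{k}_0}=\mathfrak{p}_1\mathfrak{p}_2$, i.e.\ $\left(\frac{\zeta_3}{p}\right)_3=\left(\frac{\zeta_3}{\pi_1}\right)_3\left(\frac{\zeta_3}{\pi_2}\right)_3$; equivalently, by the Chinese Remainder Theorem, $\left(\frac{\zeta_3}{p}\right)_3=1$ says that $\zeta_3$ is a cube modulo $p\mathcal{O}_{\mathrm{k}_0}$, i.e.\ a cube modulo each $\mathfrak{p}_i$. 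Combined with the equality of the two symbols from the previous step, $\left(\frac{\zeta_3}{p}\right)_3=\left(\frac{\zeta_3}{\pi_1}\right)_3^{2}$, which is $1$ if and only if $\left(\frac{\zeta_3}{\pi_1}\right)_3=1$; the desired equivalence follows.

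\emph{Main obstacle.} The delicate point is the normalization bookkeeping in the first part: establishing existence and uniqueness of the primary generator, and especially that $\pi_1\pi_2$ equals $p$ exactly and not merely up to a unit, all while keeping straight that $\tau|_{\mathrm{k}_0}$ is complex conjugation so that $\pi_2=\pi_1^{\tau}$ is genuinely the conjugate prime carrying the same primary normalization. Once the correct interpretation of $\left(\frac{\zeta_3}{p}\right)_3$ over $\mathcal{O}_{\mathrm{k}_0}$ is fixed, the second part is essentially formal.
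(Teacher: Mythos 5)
Your proof is correct, and it differs from the paper's in two respects worth noting. For the decomposition $p=\pi_1\pi_2$ with $\pi_i\equiv 1\,(\bmod\,3\mathcal{O}_{\mathrm{k}_0})$, the paper simply cites Ireland--Rosen (Prop.\ 9.1.4), whereas you reprove it from scratch via the bijection of $\mu_6$ onto $(\mathcal{O}_{\mathrm{k}_0}/3\mathcal{O}_{\mathrm{k}_0})^{\times}$, the uniqueness of the primary generator, and the positivity of the norm to get $\pi_1\pi_1^{\tau}=p$ exactly; this is more self-contained but proves nothing the citation does not already give. For the ``furthermore'' equivalence, the paper's mechanism is the same as your final step: the direction $\left(\frac{\zeta_3}{p}\right)_3=1\Rightarrow$ both local symbols equal $1$ is immediate, and the converse is obtained by gluing the two cube roots modulo $\pi_1$ and $\pi_2$ into one modulo $p$ (your appeal to CRT; the paper does this by hand, invoking coprimality of $\pi_1,\pi_2$ and ``Gauss's theorem''). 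What you add is the Galois-equivariance observation $\left(\frac{\zeta_3}{\pi_1}\right)_3=\left(\frac{\zeta_3}{\pi_2}\right)_3$, which the paper does not use; it is not needed under the reading of $\left(\frac{\zeta_3}{p}\right)_3$ as ``$\zeta_3$ is a cube modulo $p\mathcal{O}_{\mathrm{k}_0}$'' (the paper's implicit reading), but it makes your argument insensitive to whether one instead defines $\left(\frac{\zeta_3}{p}\right)_3$ as the Jacobi-type product over $\pi_1,\pi_2$ --- a point the paper leaves vague, since its introduction defines the symbol only for rational integers $c$. In short: same key CRT idea, with your version more explicit about the normalization and the meaning of the symbol at the composite modulus.
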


\begin{proof}
Let $p$ be a prime number such that $p\equiv 1~(\bmod~3)$. Then according to \cite[\S\ 9, Section 1, prop. 9.1.4, p.110]{Clas} there is two primes $\pi_1$ and $\pi_2$  of $\mathrm{k}_{0}$ such that $p=\pi_1\pi_2$,  $\pi_2=\pi_1^{\tau}$ and $\pi_1 \equiv \pi_2 \equiv 1 ~(\bmod~3\mathcal{O}_{\mathrm{k}_0})$.
\begin{itemize}
 \item[$\Leftarrow:$] It is clear that if
  $ \left(\frac{\zeta_3}{p} \right)_3=1$ then $\left(\frac{\zeta_3}{\pi_1} \right)_3= \left(\frac{\zeta_3}{\pi_2} \right)_3 = 1$. 
\item[$\Rightarrow:$] If $ \left(\frac{\zeta_3}{\pi_1} \right)_3= \left(\frac{\zeta_3}{\pi_2} \right)_3 = 1$, then the equations
$$
   \left\{
    \begin{array}{@{} l c @{}}
   \zeta_3 \equiv X^3 \ (\bmod \ \pi_{1}), \\
         \zeta_3 \equiv Y^3 \ (\bmod \ \pi_{2}),  
    \end{array}\right.
$$
 are solvable in $ \mathcal{O}_{\mathrm{k}_0},$ and we have  $ X \equiv Z (\bmod \ \pi_{1}) $ and $ Y \equiv Z (\bmod \ \pi_{2}) $ because $\mathcal{O}_{\mathrm{k}_0} / (\pi_1)$ is a field. Then $ \bar{X}=\bar{Z} \neq \bar{0}$ in
$\mathcal{O}_{\mathrm{k}_0} / (\pi_1)$, then 
$ \bar{X}^3=\bar{Z}^3$ in
$\mathcal{O}_{\mathrm{k}_0} / (\pi_1)$, thus  $ X^3 \equiv Z^3 (\bmod \ \pi_{1}) $ is solvable in $ \mathcal{O}_{\mathrm{k}_0}.$ Similarly for $Y$, we obtain   $Y^3 \equiv Z^3 (\bmod \ \pi_{2}) $ is solvable in $ \mathcal{O}_{\mathrm{k}_0}.$ Then
 $$
   \left\{
    \begin{array}{@{} l c @{}}
   \zeta_3 \equiv Z^3 \ (\bmod \ \pi_{1}), \\
         \zeta_3 \equiv Z^3 \ (\bmod \ \pi_{2}),  
    \end{array}\right.
$$
 are solvable in $ \mathcal{O}_{\mathrm{k}_0}$. So $ (\zeta_3-Z^3)$ is in $ \mathcal{O}_{\mathrm{k}_0}$ because  $\pi_1$ and $\pi_2$ are two different primes of $ \mathrm{k}_0 $. Since $(\pi_1, \pi_2)=1$,  then according to Gauss's Theorem we get  $\pi_1\pi_2 \mid (\zeta_3-Z^3)$ in $ \mathcal{O}_{\mathrm{k}_0}$, which implies that 
 $ \zeta_3 \equiv Z^3 (\bmod \ p) $ is solvable in $ \mathcal{O}_{\mathrm{k}_0}$, since $p=\pi_1\pi_2$. Thus 
 $ \left(\frac{\zeta_3}{p} \right)_3=1$.
 
\end{itemize}
\end{proof}

\begin{lemma}\label{14ss}
Let $p$ be a prime number such that $p\equiv 1~(\bmod~3)$.  If $ \left(\frac{\zeta_3}{p} \right)_3=1$, then $p\equiv 1~(\bmod~9)$, where $\left(\frac{.}{p} \right)_3$ is the cubic residue symbol.
\end{lemma}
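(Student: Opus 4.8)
The plan is to translate the hypothesis into a statement about cubes in the residue field of $\mathrm{k}_0$ at a prime dividing $p$, and then to exploit the cyclicity of that field's multiplicative group.

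By Lemma \ref{14s} I may write $p=\pi_1\pi_2$ with $\pi_1,\pi_2$ primes of $\mathrm{k}_0$ satisfying $\pi_2=\pi_1^{\tau}$ and $\pi_i\equiv 1\ (\bmod\ 3\mathcal{O}_{\mathrm{k}_0})$, and the equivalence stated there turns the hypothesis $\left(\frac{\zeta_3}{p}\right)_3=1$ into $\left(\frac{\zeta_3}{\pi_1}\right)_3=1$, i.e.\ into the solvability of $\zeta_3\equiv X^3\ (\bmod\ \pi_1)$ in $\mathcal{O}_{\mathrm{k}_0}$. Since $\pi_1$ has residue degree $1$ over $p$, the residue ring $\mathbb{F}:=\mathcal{O}_{\mathrm{k}_0}/\pi_1\mathcal{O}_{\mathrm{k}_0}$ is a field with $p$ elements, so $\mathbb{F}^{\ast}$ is cyclic of order $p-1$, and $3\mid p-1$. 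The above solvability says exactly that the image $\overline{\zeta_3}$ of $\zeta_3$ lies in the group $(\mathbb{F}^{\ast})^3$ of cubes, which is the unique subgroup of $\mathbb{F}^{\ast}$ of order $(p-1)/3$, namely $\{x\in\mathbb{F}^{\ast}\mid x^{(p-1)/3}=1\}$.

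It then remains to determine the order of $\overline{\zeta_3}$ in $\mathbb{F}^{\ast}$. It divides $3$ because $\zeta_3^3=1$, and it is not $1$: indeed $\zeta_3-1=-\lambda$ has norm $3$, so $\pi_1\nmid(\zeta_3-1)$ since $\pi_1$ lies above $p\neq 3$; hence $\overline{\zeta_3}$ has order exactly $3$. Therefore
$$\left(\tfrac{\zeta_3}{p}\right)_3=1\ \Longleftrightarrow\ \overline{\zeta_3}\in(\mathbb{F}^{\ast})^3\ \Longleftrightarrow\ \overline{\zeta_3}^{(p-1)/3}=1\ \Longleftrightarrow\ 3\mid\tfrac{p-1}{3}\ \Longleftrightarrow\ p\equiv 1\ (\bmod\ 9),$$
which in particular gives the asserted implication.

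The argument is short once the right framework is fixed; the only delicate points are the use of Lemma \ref{14s} to descend from the ``global'' symbol $\left(\frac{\zeta_3}{p}\right)_3$ to a congruence modulo the single prime $\pi_1$, and the remark that $\pi_1\nmid\lambda$ forces $\overline{\zeta_3}\neq 1$ in $\mathbb{F}$. Equivalently, one could run the last step through the Euler-type criterion $\left(\frac{\zeta_3}{\pi_1}\right)_3\equiv\zeta_3^{(N\pi_1-1)/3}\ (\bmod\ \pi_1)$: both sides are cube roots of unity, and $\pi_1$ divides no nonzero difference of two such (each difference has norm dividing $3$), so they are equal, whence $\left(\frac{\zeta_3}{\pi_1}\right)_3=1\iff\zeta_3^{(p-1)/3}=1\iff p\equiv 1\ (\bmod\ 9)$.
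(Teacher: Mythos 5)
Your proof is correct, but it takes a genuinely different route from the paper's. The paper argues computationally: it writes $p=\pi_1\pi_2$ with primary generators $\pi_1=a+b\zeta_3$, $a=3m+1$, $b=3n$, computes $p\equiv 6m-3n+1\pmod 9$, and quotes the explicit supplementary formula $\left(\frac{\zeta_3}{p}\right)_3=\zeta_3^{(1-a-b)/3}=\zeta_3^{-(m+n)}$ from Lemmermeyer's \emph{Reciprocity Laws}, so the hypothesis forces $m+n\equiv 0\pmod 3$ and substitution gives $p\equiv 1\pmod 9$. You avoid the supplementary law entirely: using only the easy direction of Lemma \ref{14s} (indeed, just reducing a global cube modulo $\pi_1$ would do), you reduce to $\overline{\zeta_3}$ being a cube in $\left(\mathcal{O}_{\mathrm{k}_0}/\pi_1\right)^{\ast}\cong\mathbb{F}_p^{\ast}$, note that $\overline{\zeta_3}$ has exact order $3$ since $\pi_1\nmid(1-\zeta_3)$ (norm $3$, while $\pi_1$ lies above $p\neq 3$), and conclude from cyclicity and the Euler criterion that this happens precisely when $3\mid\frac{p-1}{3}$, i.e.\ $p\equiv 1\pmod 9$. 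Your argument is more elementary and self-contained, replacing the cited explicit evaluation of the symbol by a short order computation in the residue field, and it actually delivers the equivalence $\left(\frac{\zeta_3}{\pi_1}\right)_3=1\Leftrightarrow p\equiv 1\pmod 9$ (hence, combined with the full statement of Lemma \ref{14s}, the converse of the lemma as well); what the paper's computation buys in exchange is the exact value $\zeta_3^{-(m+n)}$ of the symbol in the non-trivial cases, which your approach does not produce.
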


\begin{proof}
Let $p$ be a prime number such that $p\equiv 1~(\bmod~3)$. Let us assume $\left(\frac{\zeta_3}{p} \right)_3=1$. \\
Since $p\equiv 1~(\bmod~3)$, then according to  \cite[\S\ 9, Section 1, prop. 9.1.4, p.110]{Clas}, $p=\pi_1\pi_2$ where $\pi_1$ and $\pi_2$ are two primes of $\mathrm{k}_{0}$ such that $\pi_2=\pi_1^{\tau}$.
According to \cite[ \S\ \S\ 7.3, Theorem 7.8, p. 217]{RECLAWS} we obtain
$\pi_1=a+b\zeta_3$ and $\pi_2=a+b\zeta_3^2$, with $a=3m+1$ and $b=3n$, where $(n,m)\in \mathbb{N}^2$. Then:
\begin{eqnarray*}
p & = & \pi_1\pi_2 \\
 & = & (a+b\zeta_3)(a+b\zeta_3^2) \\
 & = & a^2 + b^2 - ab \\
 & = & (3m+1)^2 + (3n)^2 - (3m+1)(3n) \\
 & = & 9m^2 + 9n^2 -9mn+6m-3n+1 \\
 & \equiv & 6m-3n+1 \pmod 9 \\
\end{eqnarray*}

According to \cite[ \S\ \S\ 7.3, Theorem 7.8, p. 217]{RECLAWS} we have 
$$\left(\frac{\zeta_3}{p} \right)_3= \zeta_3^{\frac{1-a-b}{3}}=\zeta_3^{-(m+n)},$$
 then
$$
  \left(\frac{\zeta_3}{p} \right)_3= \left\{
    \begin{array}{@{} l c @{}}
      1, & \text{ if~~}  m+n \equiv 0 \pmod 3, \\
         \zeta_3, & \text{ if~~}  m+n \equiv -1 \pmod 3, \\
       \zeta_3^2, & \text{ if~~}
       m+n \equiv -2 \pmod 3,
    \end{array}\right.
$$
since $\left(\frac{\zeta_3}{p} \right)_3=1$, then there exist $k \in \mathbb{N}$ such that $m+n=3k$. So
\begin{eqnarray*}
p & \equiv & 6m-3n+1 \pmod 9 \\
 & \equiv & 6m-3(3k-m)+1 \pmod 9 \\
 & \equiv & 9m-9k+1 \pmod 9 \\
 & \equiv & 1 \pmod 9.
\end{eqnarray*}
\end{proof}

\begin{lemma}
\label{11}
Let $\mathrm{k}=\mathbb{Q}(\sqrt[3]{p}, \zeta_3)$,
where $p$ is a prime number such that $p\equiv 1~(\bmod~3)$.
Let $C_{\mathrm{k},3}^{(\sigma)}$ be the ambiguous ideal class group of $\mathrm{k}/\mathrm{k}_0$,
where $\sigma$ is a generator of $\operatorname{Gal}\left(\mathrm{k}/\mathrm{k}_0\right)$.
Then $\vert C_{\mathrm{k},3}^{(\sigma)}\vert =3$.
\end{lemma}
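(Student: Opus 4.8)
The plan is to evaluate $|C_{\mathrm{k},3}^{(\sigma)}|$ through \eqref{eq2}, i.e. $|C_{\mathrm{k},3}^{(\sigma)}|=3^{t-2+q^{*}}$, so that the whole task reduces to showing $t+q^{*}=3$. Write $\mathrm{k}=\mathrm{k}_0(\sqrt[3]{p})$; then a prime of $\mathrm{k}_0$ ramifying in $\mathrm{k}/\mathrm{k}_0$ must divide $3p$. By Lemma \ref{14s}, $p=\pi_1\pi_2$ with $\pi_1\neq\pi_2$ prime in $\mathrm{k}_0$, and since $v_{\pi_i}(p)=1\not\equiv 0\ (\bmod\ 3)$, both $\pi_1$ and $\pi_2$ ramify; the only remaining candidate is the prime $\lambda$ above $3$. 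Hence $t\in\{2,3\}$, and the first step is to decide exactly when $\lambda$ ramifies in $\mathrm{k}/\mathrm{k}_0$.

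I would settle this from the splitting type of $3$ in the pure cubic field $L=\mathbb{Q}(\sqrt[3]{p})$: it is classical that $d_{L}=-27p^{2}$ when $p\equiv 4,7\ (\bmod\ 9)$ and $d_{L}=-3p^{2}$ when $p\equiv 1\ (\bmod\ 9)$. Combining this with the conductor--discriminant formula for the $S_{3}$-extension $\mathrm{k}/\mathbb{Q}$ gives $v_{3}(d_{\mathrm{k}})=7$ in the first case and $v_{3}(d_{\mathrm{k}})=3$ in the second. Since $\lambda$ has residue characteristic $3$, ramification of $\lambda$ in the cubic extension $\mathrm{k}/\mathrm{k}_0$ would be wild and would force $v_{3}(d_{\mathrm{k}})\geq 6$; comparing exponents shows $\lambda$ ramifies in $\mathrm{k}/\mathrm{k}_0$ precisely when $p\equiv 4,7\ (\bmod\ 9)$. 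Thus $t=3$ if $p\equiv 4,7\ (\bmod\ 9)$ and $t=2$ if $p\equiv 1\ (\bmod\ 9)$. (This ramification behaviour of $\lambda$ is also available in the literature on pure cubic fields.)

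The second step is to determine $q^{*}$, i.e. to decide whether $\zeta_{3}\in\mathcal{N}_{\mathrm{k}/\mathrm{k}_{0}}(\mathrm{k}\setminus\{0\})$. Since $\mathrm{k}/\mathrm{k}_0$ is cyclic of degree $3$, the Hasse norm theorem reduces this to $\zeta_{3}$ being a local norm at every place of $\mathrm{k}_0$. At the complex place and at every prime outside $\{\pi_1,\pi_2,\lambda\}$ this holds automatically, since there $\mathrm{k}/\mathrm{k}_0$ is unramified and $\zeta_{3}$ is a unit. At $\pi_i$ the completion of $\mathrm{k}_0$ is $\mathbb{Q}_{p}$ and $\mathrm{k}$ becomes $\mathbb{Q}_{p}(\sqrt[3]{p})$, totally and tamely ramified of degree $3$; there the norm group meets $\mathbb{Z}_{p}^{\times}$ in exactly $(\mathbb{Z}_{p}^{\times})^{3}$, so $\zeta_{3}$ is a local norm at $\pi_i$ iff its residue is a cube modulo $\pi_i$, i.e. iff $\left(\frac{\zeta_{3}}{\pi_i}\right)_{3}=1$. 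If $p\equiv 4,7\ (\bmod\ 9)$, then $\left(\frac{\zeta_{3}}{p}\right)_{3}\neq 1$ by Lemma \ref{14ss}, so by Lemma \ref{14s} at least one of $\left(\frac{\zeta_{3}}{\pi_1}\right)_{3},\left(\frac{\zeta_{3}}{\pi_2}\right)_{3}$ is $\neq 1$; hence $\zeta_{3}$ is not a local norm there, $q^{*}=0$, and $t-2+q^{*}=3-2+0=1$. If $p\equiv 1\ (\bmod\ 9)$, then $\lambda$ is unramified in $\mathrm{k}/\mathrm{k}_0$ by the first step (so $\zeta_3$ is a local norm at $\lambda$ as well), and running the congruence computation in the proof of Lemma \ref{14ss} in reverse shows $\left(\frac{\zeta_{3}}{p}\right)_{3}=1$; by Lemma \ref{14s} then $\left(\frac{\zeta_{3}}{\pi_1}\right)_{3}=\left(\frac{\zeta_{3}}{\pi_2}\right)_{3}=1$, so $\zeta_{3}$ is a local norm everywhere, $q^{*}=1$, and $t-2+q^{*}=2-2+1=1$.

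In both cases $t+q^{*}=3$, so $|C_{\mathrm{k},3}^{(\sigma)}|=3^{t-2+q^{*}}=3$, as claimed. I expect the main technical point to be the first step --- pinning down the ramification of $\lambda$ in $\mathrm{k}/\mathrm{k}_0$, equivalently the splitting of $3$ in $L$ --- together with the matching local-norm analysis at the tame primes $\pi_1,\pi_2$; once those are in place, Lemmas \ref{14s} and \ref{14ss} (and the easy converse of the latter) assemble the result.
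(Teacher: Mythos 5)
Your proof is correct, and its skeleton is the same as the paper's: apply the ambiguous class number formula (\ref{eq2}), split into the cases $p\equiv 4,7\ (\bmod\ 9)$ and $p\equiv 1\ (\bmod\ 9)$, and show $(t,q^{*})=(3,0)$ in the first case and $(2,1)$ in the second. Where you differ is in how the two inputs are justified. For $t$, the paper simply quotes Dedekind's determination of the splitting of $3$ in $L$ (ramified when $p\equiv 4,7$, not contributing a new ramified prime of $\mathrm{k}/\mathrm{k}_0$ when $p\equiv 1\ (\bmod\ 9)$), whereas you rederive it from $d_L=-27p^2$ versus $d_L=-3p^2$, the conductor--discriminant formula for the $S_3$-extension $\mathrm{k}/\mathbb{Q}$, and the wild-ramification lower bound $v_3(d_{\mathrm{k}})\ge 6$; this is more work but self-contained, and the exponent comparison ($7$ vs.\ $3$) does pin down the ramification of $\lambda$ in both directions. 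For $q^{*}$, the paper cites Gerth's Hilbert-symbol criterion in terms of the congruences $\pi_i\equiv 1\ (\bmod\ (1-\zeta_3)^3)$ or not, while you argue via the Hasse norm theorem plus an explicit computation of the norm group at the tame primes $\pi_i$ (local norm there iff $\left(\tfrac{\zeta_3}{\pi_i}\right)_3=1$), then invoke Lemmas \ref{14s} and \ref{14ss}; the converse of Lemma \ref{14ss} that you use in the case $p\equiv 1\ (\bmod\ 9)$ is indeed immediate from that proof's computation ($p\equiv 1\ (\bmod\ 9)$ forces $m+n\equiv 0\ (\bmod\ 3)$, hence $\left(\tfrac{\zeta_3}{p}\right)_3=1$), and your local norm claim at $\pi_i$ is correct for a totally, tamely ramified cyclic cubic extension of $\mathbb{Q}_p$. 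In short: the paper's route is shorter by leaning on Dedekind and Gerth; yours makes the local--global mechanism behind $q^{*}$ and the ramification of $\lambda$ explicit, at the cost of the discriminant bookkeeping.
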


\begin{proof}
Since $p\equiv 1 \ (\bmod \ 3$),
then according to \cite[\S\ 9, Section 1, prop. 9.1.4, p.110]{Clas}
we have $p=\pi_1\pi_2$,
where $\pi_1$ and $\pi_2$ are two primes of $\mathrm{k}_{0}$ such that
$\pi_2=\pi_1^{\tau}$ and $\pi_1 \equiv \pi_2 \equiv 1 ~(\bmod~3\mathcal{O}_{\mathrm{k}_0})$.
We study all cases depending on the congruence class of $p$ modulo $9$, then:
\begin{itemize}
\item
If $p \equiv 4\ \text{or} \ 7 \ (\bmod \ 9)$,
then according to \cite[\S\ 4, pp. 51-55]{DED},
the prime $3$ is ramified in the field $L$, so the prime ideal $(1-\zeta_3)$ is ramified in $\mathrm{k}/\mathrm{k}_0$.
Also $\pi_1$ and $\pi_2$ are totally ramified in $\mathrm{k}$.
So $t=3$.
As $p\equiv 4 \  or \  7  \pmod 9$, then $\pi_1\pi_2 = p \equiv 4 \  or \  7  \ (\bmod\  (1-\zeta_3)^4)$
because $9=3^2=(-\zeta_3^2(1-\zeta_3)^2)^2=\zeta_3(1-\zeta_3)^4$
(cf. \cite[\S\ 9, Section 1, prop. 9.1.4, p.110]{Clas}),
so $p=\pi_1\pi_2 \equiv 4 \  or \  7  \ (\bmod\  (1-\zeta_3)^3)$.
Thus $\pi_1$ and $\pi_2 \not \equiv 1  \ (\bmod\  (1-\zeta_3)^3)$,
and according to \cite[\S\ 5, pp. 91-92]{GERTH1} we obtain
$$\left(\frac{\zeta_3,p}{p} \right)_3 \neq 1$$
where the symbol $(\dfrac{ \ , \ }{})_3$ is the cubic Hilbert symbol.
We deduce that $\zeta_3$ is not a norm in the extension $\mathrm{k} / \mathrm{k}_{0} $,
so $q^*=0$.
By Equation (\ref{eq2}) we get $\vert C_{\mathrm{k},3}^{(\sigma)}\vert= 3$.
\item
If $p \equiv 1 \ (\bmod \ 9)$,
then the prime ideals which ramify in $\mathrm{k}/\mathrm{k}_0$ are $(\pi_{1})$ and $(\pi_{2})$, so $t=2$.
Moreover, $\pi_{1} \equiv \pi_2 \equiv 1 \ ( \bmod \ (1-\zeta_3)^3 )$.
Thus, according to \cite[\S\ 5, pp. 91-92]{GERTH1}, the cubic Hilbert symbol is:
$$\left(\frac{\zeta_3,p}{ \pi_1}\right)_{3}=\left(\frac{\zeta_3,p}{ \pi_2}\right)_{3}=1.$$
We conclude that $\zeta_3$ is a norm in the extension $\mathrm{k} / \mathrm{k}_{0} $,
that is, $q^*=1$, so according to Equation (\ref{eq2})
we obtain  $\vert C_{\mathrm{k},3}^{(\sigma)}\vert= 3$.
\end{itemize}

\end{proof}

Next, we specify the $\rank$ of $C_{\mathrm{k},3}$ as follows:

\begin{lemma}
\label{32}
Let $\mathrm{k}=\mathbb{Q}(\sqrt[3]{p}, \zeta_3)$, where $p$ is a prime number such that $p\equiv 1~(\bmod~3)$.
Let $s$ be the non-null positive integer such that
$C_{\mathrm{k},3}^{(\sigma)}\subseteq C_{\mathrm{k},3}^{(1-\sigma)^{s-1}} $
but $C_{\mathrm{k},3}^{(\sigma)} \not \subseteq C_{\mathrm{k},3}^{(1-\sigma)^{s}} $,
where $C_{\mathrm{k},3}^{(\sigma)}$ denotes the ambiguous ideal class group of $\mathrm{k}/\mathbb{Q}(\zeta_3)$
and $\sigma$ is a generator of $\operatorname{Gal}\left(\mathrm{k}/\mathbb{Q}(\zeta_3)\right)$.
Let $C_{\mathrm{k},3}$ be the $3$-component of the class group of $\mathrm{k}$,
then $\vert C_{\mathrm{k},3} \vert=3^{s}$, and $\rank C_{\mathrm{k},3}=1$ or $2$.
Furthermore
$$
  C_{\mathrm{k},3} \simeq  \left\{
    \begin{array}{@{} l c @{}}
      \left(\mathbb{Z}/3^{\frac{s}{2}}\mathbb{Z}\right)^{2}, & \text{ if~~} s \,\, \text{is even,} \\
       \mathbb{Z}/3^{\frac{s+1}{2}}\mathbb{Z} \times \mathbb{Z}/3^{\frac{s-1}{2}}\mathbb{Z}, & \text{ if~~}
       s \,\, \text{is odd,}
    \end{array}\right.
$$

\end{lemma}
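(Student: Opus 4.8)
The approach I would use is module-theoretic, resting on the facts already established in the preliminaries. Write $C:=C_{\mathrm{k},3}$. We know $C$ is a finite module over $R:=\mathbb{Z}_3[\zeta_3]$, a discrete valuation ring whose maximal ideal is generated by $\lambda:=1-\zeta_3$, with residue field $R/\lambda R\cong\mathbb{F}_3$ and $\lambda^2R=3R$ (indeed $\lambda^2=-3\zeta_3$); moreover the operator $1-\sigma$ on $C$ acts as multiplication by $\lambda$, so that $C^{(1-\sigma)^i}=\lambda^iC$ for every $i$ and the descending filtration appearing in the statement is just the $\lambda$-adic one.

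The one substantive point is that $C$ is a \emph{cyclic} $R$-module. Since $3R=\lambda^2R\subseteq\lambda R$, the quotient $C/\lambda C$ is killed by $3$ and so is an $\mathbb{F}_3$-vector space, of some dimension $r$, which by Nakayama's lemma equals the minimal number of $R$-generators of $C$. Now $C/\lambda C=C/C^{(1-\sigma)}$, and from the exact sequence displayed in the preliminaries together with Lemma~\ref{11} we obtain $|C/C^{(1-\sigma)}|=|C_{\mathrm{k},3}^{(\sigma)}|=3$, whence $r=1$. Therefore $C\cong R/I$ for some ideal $I$ with $R/I$ finite and nonzero (nonzero because Lemma~\ref{11} gives $C_{\mathrm{k},3}^{(\sigma)}\neq 1$), and since $R$ is a discrete valuation ring this forces $I=\lambda^{s_0}R$ for a unique integer $s_0\ge 1$; hence $|C|=|R/\lambda^{s_0}R|=3^{s_0}$.

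Next I would check that $s_0$ is exactly the integer $s$ of the statement. Because $C\cong R/\lambda^{s_0}R$ is cyclic, its $R$-submodules form the single chain $C\supsetneq\lambda C\supsetneq\cdots\supsetneq\lambda^{s_0}C=0$, with $C^{(1-\sigma)^i}=\lambda^iC$; and $C_{\mathrm{k},3}^{(\sigma)}=\{x\in C:\lambda x=0\}=\lambda^{s_0-1}C$ is the smallest nonzero member of this chain. Consequently $C_{\mathrm{k},3}^{(\sigma)}\subseteq C^{(1-\sigma)^i}$ if and only if $i\le s_0-1$, so $s=s_0$ and $|C_{\mathrm{k},3}|=3^s$.

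It remains to read off the abelian type of $R/\lambda^sR$, for which I would use the $\mathbb{Z}_3$-basis $\{1,\lambda\}$ of $R$ (legitimate since $\zeta_3=1-\lambda$), giving $R=\mathbb{Z}_3\oplus\mathbb{Z}_3\lambda$ as a $\mathbb{Z}_3$-module. If $s=2m$ is even, then $\lambda^sR=3^mR$, so $R/\lambda^sR\cong(\mathbb{Z}_3/3^m\mathbb{Z}_3)^2\cong(\mathbb{Z}/3^m\mathbb{Z})^2$. If $s=2m+1$ is odd, then from $\lambda^2=-3\zeta_3=-3+3\lambda$ one computes $\lambda R=3\mathbb{Z}_3\oplus\mathbb{Z}_3\lambda$, hence $\lambda^sR=3^m\lambda R=3^{m+1}\mathbb{Z}_3\oplus 3^m\mathbb{Z}_3\lambda$ and $R/\lambda^sR\cong\mathbb{Z}/3^{m+1}\mathbb{Z}\times\mathbb{Z}/3^m\mathbb{Z}$. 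Substituting $m=s/2$, respectively $m=(s-1)/2$, produces the two displayed isomorphisms; and since $R/\lambda^sR$ is cyclic exactly when $s=1$, we get $\rank C_{\mathrm{k},3}\in\{1,2\}$ throughout. The only step calling for a little care is this last computation in the odd case; everything before it follows immediately from Lemma~\ref{11} and the valuation-ring structure of $\mathbb{Z}_3[\zeta_3]$, so I anticipate no genuine obstacle.
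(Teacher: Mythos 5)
Your proposal is correct, and its route to the refined structure statement is genuinely different from the paper's. You prove that $C_{\mathrm{k},3}$ is a \emph{cyclic} $\mathbb{Z}_3[\zeta_3]$-module: from the exact sequence in the preliminaries and Lemma \ref{11} you get $\vert C_{\mathrm{k},3}/C_{\mathrm{k},3}^{1-\sigma}\vert=3$, Nakayama gives one generator, so $C_{\mathrm{k},3}\cong \mathbb{Z}_3[\zeta_3]/\lambda^{s}\mathbb{Z}_3[\zeta_3]$, and the abelian type $(3^{s/2},3^{s/2})$ resp. $(3^{(s+1)/2},3^{(s-1)/2})$ is then read off by an explicit $\mathbb{Z}_3$-basis computation (your treatment of $\lambda R=3\mathbb{Z}_3\oplus\mathbb{Z}_3\lambda$ in the odd case is right, and the identification of $s$ with the valuation exponent via $C_{\mathrm{k},3}^{(\sigma)}=\ker(1-\sigma)=\lambda^{s-1}C_{\mathrm{k},3}$ is clean). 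The paper instead obtains $\vert C_{\mathrm{k},3}\vert=3^{s}$ and $\rank\in\{1,2\}$ the same way you do, but then determines the type by genus theory and Kummer theory: it identifies the relative genus field $(\mathrm{k}/\mathrm{k}_0)^{*}=\mathrm{k}(\sqrt[3]{\pi_1\pi_2^{2}})$, uses the congruence $(\pi_1\pi_2^{2})^{\tau}\equiv(\pi_1\pi_2^{2})^{-1}\bmod(\mathrm{k}^{\times})^{3}$ to get $\vert(C_{\mathrm{k},3}/C_{\mathrm{k},3}^{1-\sigma})^{+}\vert=3$ and $\vert(C_{\mathrm{k},3}/C_{\mathrm{k},3}^{1-\sigma})^{-}\vert=1$, and then propagates this through the alternating surjections $\varphi_i$ to compute $\vert C_{\mathrm{k},3}^{+}\vert$ and $\vert C_{\mathrm{k},3}^{-}\vert$ separately. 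What each buys: your argument is shorter, purely local-algebraic, and sidesteps the genus-field identification (as well as the slightly garbled parity bookkeeping in the paper's display, where the alternation should be in $i$ rather than in $s$); the paper's longer route, however, yields the individual orders of $C_{\mathrm{k},3}^{+}\cong C_{L,3}$ and $C_{\mathrm{k},3}^{-}$, which is extra information the paper explicitly reuses later (in the proofs of Lemma \ref{22}, Lemma \ref{NewLemma} and of the conjecture itself), and which your argument does not directly provide because $\tau$ acts only semilinearly on the $\mathbb{Z}_3[\zeta_3]$-module $C_{\mathrm{k},3}$.
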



\begin{proof}

We have $p\equiv 1~(\bmod~3)$, then by Lemma \ref{11},
$\vert C_{\mathrm{k},3}^{(\sigma)}\vert =3$.
Since $|C_{\mathrm{k},3}^{(\sigma)}|= | C_{\mathrm{k},3}/C_{\mathrm{k},3}^{1-\sigma} |$, then
$$ \vert C_{\mathrm{k},3}/C_{\mathrm{k},3}^{1-\sigma}\vert=
\vert
C_{\mathrm{k},3}^{1-\sigma}/C_{\mathrm{k},3}^{(1-\sigma)^{2}}\vert=
\ldots = \vert C_{\mathrm{k},3}^{(1-\sigma)^{s-1}}/
C_{\mathrm{k},3}^{(1-\sigma)^{s}} \vert=3,$$
\noindent
where $s$ is the positive integer defined above, and we have
$$\vert C_{\mathrm{k},3} \vert = \vert C_{\mathrm{k},3}/C_{\mathrm{k},3}^{1-\sigma}\vert
 \times \vert
C_{\mathrm{k},3}^{1-\sigma}/C_{\mathrm{k},3}^{(1-\sigma)^{2}}\vert
\times \ldots \times \vert C_{\mathrm{k},3}^{(1-\sigma)^{s-1}}/
C_{\mathrm{k},3}^{(1-\sigma)^{s}} \vert=3^s.$$
According to Equation (\ref{eq1}),
it is easy to see that
 \[ \rank C_{\mathrm{k},3} = \left\{
  \begin{array}{l l}
    1 & \quad \text{if \ $ s=1. $     }\\
        2 & \quad \text{if \ $ s>1. $   }\\
  \end{array} \right.\]

Next, let $\mathrm{k}_3^{(1)}$ be the maximal abelian unramified $3$-extension of $\mathrm{k}$.
So $\mathrm{k}_3^{(1)}/\mathrm{k}_0$ is Galois, and according to class field theory we have

\begin{equation}\label{IsomCk}
\operatorname{Gal}\left(\mathrm{k}_3^{(1)}/\mathrm{k} \right) \cong  C_{\mathrm{k},3}.
\end{equation}

We denote by $(\mathrm{k}/\mathrm{k}_0)^{*}$ the maximal abelian extension of $\mathrm{k}_0$ contained in $\mathrm{k}_3^{(1)}$,
which is called the \textit{relative genus field} of $\mathrm{k}/\mathrm{k}_0$
(cf. \cite[\S\ 2, p. VII-3]{HERZ}).
So the commutator subgroup of $\operatorname{Gal}\left(\mathrm{k}_3^{(1)}/\mathrm{k}_0 \right)$
coincides with $\operatorname{Gal}\left(\mathrm{k}_3^{(1)}/ (\mathrm{k}/\mathrm{k}_0)^{*} \right)$ and thus
$$\operatorname{Gal}\left((\mathrm{k}/\mathrm{k}_0)^{*}/\mathrm{k}_0 \right) \cong
\operatorname{Gal}\left(\mathrm{k}_3^{(1)}/\mathrm{k}_0 \right)/ \operatorname{Gal}\left(\mathrm{k}_3^{(1)}/ (\mathrm{k}/\mathrm{k}_0)^{*} \right).$$

The fact that $\mathrm{k}/\mathrm{k}_0$ is abelian and that $\mathrm{k} \subseteq (\mathrm{k}/\mathrm{k}_0)^{*}$ implies that
$\operatorname{Gal}\left(\mathrm{k}_3^{(1)}/ (\mathrm{k}/\mathrm{k}_0)^{*} \right)$ coincides with $C_{\mathrm{k},3}^{1-\sigma}$,
with the aid of the isomorphism (\ref{IsomCk}) above and by Artin's reciprocity law.
$C_{\mathrm{k},3}^{1-\sigma}$ is called the \textit{principal genus} of $ C_{\mathrm{k},3}$.
Thus
$$\operatorname{Gal}\left( (\mathrm{k}/\mathrm{k}_0)^{*}/ \mathrm{k}  \right) \cong C_{\mathrm{k},3}/C_{\mathrm{k},3}^{1-\sigma}.$$

Since  $\vert C_{\mathrm{k},3}/C_{\mathrm{k},3}^{1-\sigma}\vert= \vert C_{\mathrm{k},3}^{(\sigma)}\vert=3$,
then $(\mathrm{k}/\mathrm{k}_0)^{*}$ is an unramified cyclic cubic extension of $\mathrm{k}$ of degree $3$ which is an abelian extension of $\mathrm{k}_0$. \\
Since $p \equiv 1~(\bmod~3)$, then according to \cite[\S\ 9, Section 1, prop. 9.1.4, p.110]{Clas} we have $p=\pi_1\pi_2$,
where $\pi_1$ and $\pi_2$ are two primes in $\mathrm{k}_{0}$ such that $\pi_2=\pi_1^{\tau}$.
Let $\zeta_{p}$ be a primitive $p$th root of unity.
If we denote by $\mathrm{k}_{p}$ the unique sub-field of $\mathbb{Q}(\zeta_{p})$ of degree $3$ in which only $p$ ramifies,
then $\mathrm{k}_{p}\mathrm{k}_0 = \mathbb{Q}( \zeta_3, \sqrt[3]{\pi_1\pi_2^2})$.
Hence $(\mathrm{k}/\mathrm{k}_0)^{*}=\mathrm{k}(\sqrt[3]{\pi_1\pi_2^2})$.
From the congruence:
$$\left(\pi_1\pi_2^{2}\right)^{\tau}=\pi_2\pi_1^{2}\equiv
\left(\pi_1\pi_2^{2}\right)^{-1}\bmod
\left(\mathrm{k}^{\times}\right)^{3},$$
we conclude by Kummer theory and according to \cite[Proposition 2.4, p. 54]{GERTH2} that:

$$
\vert\left(C_{\mathrm{k},3}/C_{\mathrm{k},3}^{1-\sigma}\right)^{+}\vert=3~~
\emph{and} \
\vert\left(C_{\mathrm{k},3}/C_{\mathrm{k},3}^{1-\sigma}\right)^{-}\vert=1.$$
However, by the observations on the surjective maps $\varphi_{i}$ above, for each integer $i$ such that $0 \leq i \leq s-1 $ we have:

\[  \vert\left(C_{\mathrm{k},3}^{(1-\sigma)^{i}}/
C_{\mathrm{k},3}^{(1-\sigma)^{i+1}}\right)^{+}\vert = \left\{
  \begin{array}{l l}
   3, \ \ \  \text{if}\  s \text{ \ is even,} \\
      1, \ \ \  \text{if}\  s \text{ \ is odd}.    \\
\end{array} \right.\]
and
\[  \vert\left(C_{\mathrm{k},3}^{(1-\sigma)^{i}}/
C_{\mathrm{k},3}^{(1-\sigma)^{i+1}}\right)^{-}\vert = \left\{
  \begin{array}{l l}
   1, \ \ \  \text{if}\  s \text{ \ is even,} \\
       3, \ \ \  \text{if}\  s \text{ \ is odd}.    \\
\end{array} \right.\]

We conclude that:

\[  \vert C_{\mathrm{k},3}^{+}\vert = \left\{
  \begin{array}{l l}
   3^{\frac{s}{2}}, \ \ \  \text{if}\  s \text{ \ is even,} \\
       3^{\frac{s+1}{2}}, \ \ \  \text{if}\  s \text{ \ is odd}.    \\
  \end{array} \right.\]
  and
  \[  \vert C_{\mathrm{k},3}^{-}\vert = \left\{
  \begin{array}{l l}
   3^{\frac{s}{2}}, \ \ \  \text{if}\  s \text{ \ is even,} \\
       3^{\frac{s-1}{2}}, \ \ \  \text{if}\  s \text{ \ is odd}.    \\
  \end{array} \right.\]

Since $\rank  C_{\mathrm{k},3} \in \{1,2\}$, then
$C_{\mathrm{k},3}^{+}$ and $C_{\mathrm{k},3}^{-}$ are a cyclic $3$-groups.
Hence:

$$
  C_{\mathrm{k},3} \simeq  \left\{
    \begin{array}{@{} l c @{}}
      \left(\mathbb{Z}/3^{\frac{s}{2}}\mathbb{Z}\right)^{2}, & \text{if~~} s \,\, \text{is even}, \\
       \mathbb{Z}/3^{\frac{s+1}{2}}\mathbb{Z} \times \mathbb{Z}/3^{\frac{s-1}{2}}\mathbb{Z}, & \text{if~~}
       s \,\, \text{is odd}.
    \end{array}\right.
$$

\end{proof}

\begin{lemma}\label{22}
Let $\mathrm{k}=\mathbb{Q}(\sqrt[3]{p}, \zeta_3)$, where $p$ is a prime number such that $p\equiv 1~(\bmod~3)$.
Let $C_{\mathrm{k},3}^{(\sigma)} = \lbrace \mathcal{A} \in C_{\mathrm{k},3} / ~\mathcal{A}^{\sigma} = \mathcal{A} \rbrace$
be the ambiguous ideal class group of $\mathrm{k}/\mathrm{k}_0$,
and $u$ be the index of subfield units defined as above.
\begin{itemize}
\item[(i)] If $\vert \left(C_{\mathrm{k},3}^{(\sigma)}
  \right)^{+}\vert =3$, then $u=1$,

\item[(ii)] If $\vert \left(C_{\mathrm{k},3}^{(\sigma)}
  \right)^{+}\vert =1$, then $u=3$,
\end{itemize}
where $\left(C_{\mathrm{k},3}^{(\sigma)}
  \right)^{+}$ and $\left(C_{\mathrm{k},3}^{(\sigma)}
  \right)^{-}$ are defined in  \cite[\S\ 2, Lemma 2.1, p. 53]{GERTH2}.
\end{lemma}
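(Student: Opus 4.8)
The plan is to relate the ambiguous class group decomposition $C_{\mathrm{k},3}^{(\sigma)} \cong (C_{\mathrm{k},3}^{(\sigma)})^{+} \times (C_{\mathrm{k},3}^{(\sigma)})^{-}$ to the subfield-unit index $u$ via the analytic class number formula for $\mathrm{k}$, exactly in the spirit of the proof of Lemma \ref{33}. By Lemma \ref{11} we already know $|C_{\mathrm{k},3}^{(\sigma)}| = 3$, so precisely one of the two factors $(C_{\mathrm{k},3}^{(\sigma)})^{+}$, $(C_{\mathrm{k},3}^{(\sigma)})^{-}$ has order $3$ and the other is trivial; hence cases (i) and (ii) are mutually exclusive and exhaustive, and it suffices to prove one implication in each, or equivalently to show $|(C_{\mathrm{k},3}^{(\sigma)})^{+}| = 3 \iff u = 1$.

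First I would identify $(C_{\mathrm{k},3}^{(\sigma)})^{+}$ with $(C_{\mathrm{k},3}/C_{\mathrm{k},3}^{1-\sigma})^{+}$, using the rank equality $\rank C_{\mathrm{k},3}^{(\sigma)} = \rank(C_{\mathrm{k},3}/C_{\mathrm{k},3}^{1-\sigma})$ established in \S\ref{ss:Preliminaries} together with the $\langle\tau\rangle$-module structure (both groups are elementary abelian of order $3$, so the plus/minus parts match up). Then, by the genus-theory analysis carried out in the proof of Lemma \ref{32}, $(\mathrm{k}/\mathrm{k}_0)^{*} = \mathrm{k}(\sqrt[3]{\pi_1\pi_2^2})$ is the relative genus field and $\operatorname{Gal}((\mathrm{k}/\mathrm{k}_0)^{*}/\mathrm{k}) \cong C_{\mathrm{k},3}/C_{\mathrm{k},3}^{1-\sigma}$; the key structural input is whether this degree-$3$ unramified extension of $\mathrm{k}$ lies in $\mathrm{k} L_3^{(1)}$ (the ``plus'' part, coming from the base field $L$) or is genuinely ``minus''. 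The alternative, cleaner route — which I expect to be the actual argument — is to invoke the Barrucand--Cohn class number relation $|C_{\mathrm{k},3}| = \tfrac{u}{3}|C_{L,3}|^2$ from \cite[\S\ 14, Theorem 14.1]{B-C} together with $C_{\mathrm{k},3}^{+} \cong C_{L,3}$: combining $|C_{\mathrm{k},3}| = |C_{\mathrm{k},3}^{+}|\cdot|C_{\mathrm{k},3}^{-}|$ with the order formulas for $|C_{\mathrm{k},3}^{\pm}|$ in terms of $s$ from Lemma \ref{32}, one reads off that $u = 1$ forces $s$ odd (so the first step of the $(1-\sigma)$-filtration is on the plus side, giving $|(C_{\mathrm{k},3}^{(\sigma)})^{+}| = 3$) and $u = 3$ forces $s$ even (so that first step is on the minus side, giving $|(C_{\mathrm{k},3}^{(\sigma)})^{+}| = 1$).

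Concretely, I would argue: from $|C_{\mathrm{k},3}| = 3^s$ and $|C_{\mathrm{k},3}^{+}| = |C_{L,3}| = h$, the relation $3^s = \tfrac{u}{3}h^2$ gives $h = 3^{(s+1)/2}$ when $u=3$ and $h = 3^{(s-1)/2}$... wait — reconciling this with Lemma \ref{32}'s formulas ($|C_{\mathrm{k},3}^{+}| = 3^{(s+1)/2}$ for $s$ odd, $3^{s/2}$ for $s$ even) pins down the parity of $s$ from the value of $u$: $u=1 \Rightarrow 3^{s} = \tfrac13 |C_{\mathrm{k},3}^{+}|^2 \Rightarrow |C_{\mathrm{k},3}^{+}|^2 = 3^{s+1} \Rightarrow s$ odd; $u=3 \Rightarrow |C_{\mathrm{k},3}^{+}|^2 = 3^{s-1} \Rightarrow s$ odd would force $|C_{\mathrm{k},3}^{+}| = 3^{(s-1)/2}$, contradicting Lemma \ref{32}, so $s$ even. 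Finally, by the surjectivity of the maps $\varphi_i$ and the plus/minus swapping recorded in \S\ref{ss:Preliminaries}, $(C_{\mathrm{k},3}/C_{\mathrm{k},3}^{1-\sigma})^{+}$ — equivalently $(C_{\mathrm{k},3}^{(\sigma)})^{+}$ — is nontrivial exactly when the bottom quotient of the filtration sits on the plus side, which by the proof of Lemma \ref{32} happens precisely when $s$ is odd. Chaining these equivalences yields (i) and (ii).

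The main obstacle is making the identification $(C_{\mathrm{k},3}^{(\sigma)})^{+} \cong (C_{\mathrm{k},3}/C_{\mathrm{k},3}^{1-\sigma})^{+}$ fully rigorous as $\langle\tau\rangle$-modules — the group-theoretic isomorphism from the exact sequence is $\tau$-equivariant, but one must check this carefully — and, relatedly, confirming that ``the bottom quotient of the $(1-\sigma)$-filtration is on the plus side $\iff s$ odd'' is exactly what the genus-field computation in Lemma \ref{32} delivers (it shows $|(C_{\mathrm{k},3}/C_{\mathrm{k},3}^{1-\sigma})^{+}| = 3$, i.e.\ the bottom is always plus, and then the $\varphi_i$ alternate, so the $s$-th quotient $C_{\mathrm{k},3}^{(1-\sigma)^{s-1}}/C_{\mathrm{k},3}^{(1-\sigma)^s}$ is plus iff $s$ odd — but it is the count of plus-quotients among the first $s$ that feeds the order formula). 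I would therefore state the lemma's proof as a short deduction from Lemma \ref{32}, Lemma \ref{11}, and the Barrucand--Cohn formula, and relegate the equivariance check to the already-cited \cite[\S\ 2, Lemma 2.1]{GERTH2}.
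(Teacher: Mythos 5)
Your second (``cleaner'') route is in substance the paper's own argument: Lemma \ref{11} gives $|C_{\mathrm{k},3}^{(\sigma)}|=3$, the order formulas for $C_{\mathrm{k},3}^{\pm}$ in terms of $s$ come from the proof of Lemma \ref{32}, and the Barrucand--Cohn relation $|C_{\mathrm{k},3}|=\frac{u}{3}|C_{L,3}|^{2}$ together with $|C_{\mathrm{k},3}|=3^{s}$ ties $u$ to the parity of $s$; the paper argues from the parity of $s$ (squareness of $3^{s}$) to the value of $u$, and your contrapositive direction is equally valid since, as you note, the two cases are mutually exclusive and exhaustive. Your arithmetic slip ``$u=3\Rightarrow|C_{\mathrm{k},3}^{+}|^{2}=3^{s-1}$'' (it should be $3^{s}$) is harmless: squareness alone gives $u=3\Rightarrow s$ even and $u=1\Rightarrow s$ odd.

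The genuine gap is the pivot you yourself flag: the link between $(C_{\mathrm{k},3}^{(\sigma)})^{\pm}$ and the parity of $s$. The identification $(C_{\mathrm{k},3}^{(\sigma)})^{+}\cong(C_{\mathrm{k},3}/C_{\mathrm{k},3}^{1-\sigma})^{+}$ is false, and no equivariance check can rescue it: the genus computation inside the proof of Lemma \ref{32} shows $|(C_{\mathrm{k},3}/C_{\mathrm{k},3}^{1-\sigma})^{+}|=3$ unconditionally, whereas case (ii) of the lemma is precisely the situation $|(C_{\mathrm{k},3}^{(\sigma)})^{+}|=1$ (it does occur, e.g.\ $p=61$, $u=3$); so invariants and coinvariants have the same order but opposite $\tau$-type exactly when $s$ is even. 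The obstruction is structural: $(\mathcal{A}^{1-\sigma})^{\tau}=(\mathcal{A}^{\tau})^{1-\sigma^{2}}$, so the four-term exact sequence is not $\tau$-equivariant in the naive sense --- this is the same plus/minus swap recorded for the maps $\varphi_{i}$. Likewise your sentence that the bottom quotient is on the plus side iff $s$ is odd is wrong: the bottom quotient is always plus. The correct bridge, which your write-up circles but never states, is that $C_{\mathrm{k},3}^{(\sigma)}$ equals the \emph{top} layer $C_{\mathrm{k},3}^{(1-\sigma)^{s-1}}$ of the filtration (by the definition of $s$ one has $C_{\mathrm{k},3}^{(\sigma)}\subseteq C_{\mathrm{k},3}^{(1-\sigma)^{s-1}}$, and both groups have order $3$ because all $s$ layers have order $3$ and $C_{\mathrm{k},3}^{(1-\sigma)^{s}}=1$); by the alternation of the surjections $\varphi_{i}$ starting from a plus bottom layer, this top layer is plus iff $s-1$ is even, i.e.\ iff $s$ is odd. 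With that one step supplied, your argument closes and coincides with the paper's proof.
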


\begin{proof}
Let $C_{\mathrm{k},3}$ be the $3$-component of
the class group of $\mathrm{k}$.
According to  \cite[\S\ 2, Lemma 2.1 and Lemma 2.2, p. 53]{GERTH2},
$ C_{\mathrm{k},3} \cong C_{L,3} \times C_{\mathrm{k},3}^{-}$,
where $C_{L,3}$ is the $3$-component of the class group of $L=\mathbb{Q}(\sqrt[3]{p})$.
Let $s$ be the positive integer defined in Lemma \ref{32}.
From proof of Lemma \ref{32} we obtain:
\[  \vert C_{L,3}\vert = \left\{
  \begin{array}{l l}
   3^{\frac{s}{2}}, \ \ \  \text{if}\  s \text{ \ is even,} \\
       3^{\frac{s+1}{2}}, \ \ \  \text{if}\  s \text{ \ is odd,}    \\
  \end{array} \right.\]
  and
  \[  \vert C_{\mathrm{k},3}^{-}\vert = \left\{
  \begin{array}{l l}
   3^{\frac{s}{2}}, \ \ \  \text{if}\  s \text{ \ is even,} \\
       3^{\frac{s-1}{2}}, \ \ \  \text{if}\  s \text{ \ is odd}.    \\
  \end{array} \right.\]

According to \cite[\S\ 14, Theorem 14.1, p. 232]{B-C}, $ \vert C_{\mathrm{k},3}\vert=\dfrac{u}{3}~\vert C_{L,3}\vert^2$, where $u=1$ or $3$.
By Lemma \ref{32}, we have $ \vert C_{\mathrm{k},3}\vert=3^{s}$, thus:
\begin{itemize}
\item[(i)] If $\vert \left(C_{\mathrm{k},3}^{(\sigma)} \right)^{+}\vert =3$ and $\vert \left(C_{\mathrm{k},3}^{(\sigma)} \right)^{-}\vert= 1$,
the integer $s$ is odd, whence $ \vert C_{\mathrm{k},3}\vert$ is not a square, and thus $u=1$.

\item[(ii)]
Conversely, if $\vert \left(C_{\mathrm{k},3}^{(\sigma)} \right)^{+}\vert =1$ and $\vert \left(C_{\mathrm{k},3}^{(\sigma)} \right)^{-}\vert= 3$,
the integer $s$ is even, whence $ \vert C_{\mathrm{k},3}\vert$ is a square, and thus $u=3$.
  \end{itemize}
\end{proof}

\begin{lemma}
\label{NewLemma}
Let $L=\mathbb{Q}(\sqrt[3]{p})$, where $p$ is a prime number such that $p\equiv 1~(\bmod~3)$,
and $\mathrm{k}=\mathbb{Q}(\sqrt[3]{p}, \zeta_3)$.
The prime $p$ decomposes
in the field $\mathrm{k}$ as $\mathcal{P}^{3}\mathcal{Q}^{3}$,
 where $\mathcal{P}$ and $\mathcal{Q}$ are two prime ideals of $\mathrm{k}$,
 and $\mathcal{Q}$ is the conjugate of $\mathcal{P}$. Let $u$ and $s$ be the integers defined as above.
Then the following statements are equivalent:
\begin{itemize}
\item[1)] $u=3 $;
\item[2)]  $s$ is even;
\item[3)]  $\mathcal{P}$ is not principal.
  \end{itemize}
\end{lemma}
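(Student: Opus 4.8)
Here is the line of attack I would take.

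The plan is to first record the factorisation of $p$ in $\mathrm{k}$ together with the basic properties of the class $[\mathcal{P}]$, and then to read off the three equivalences from the structural Lemmas \ref{11}, \ref{22} and \ref{32} and the Barrucand--Cohn index formula. First, since $p\equiv 1\pmod 3$ we have $p=\pi_1\pi_2$ in $\mathrm{k}_0$ with $\pi_2=\pi_1^{\tau}$ as in Lemma \ref{14s}, and $\mathrm{k}/\mathrm{k}_0=\mathrm{k}_0(\sqrt[3]{p})/\mathrm{k}_0$ is Eisenstein at each $\pi_i$, so $\pi_i\mathcal{O}_{\mathrm{k}}=\mathcal{P}_i^{3}$ and hence $p\mathcal{O}_{\mathrm{k}}=\mathcal{P}^{3}\mathcal{Q}^{3}$ with $\mathcal{P}=\mathcal{P}_1$, $\mathcal{Q}=\mathcal{P}_2=\mathcal{P}^{\tau}$. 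From $(\mathcal{P}\mathcal{Q})^{3}=p\,\mathcal{O}_{\mathrm{k}}=(\sqrt[3]{p}\,\mathcal{O}_{\mathrm{k}})^{3}$ I get $\mathcal{P}\mathcal{Q}=\sqrt[3]{p}\,\mathcal{O}_{\mathrm{k}}$, whence $[\mathcal{Q}]=[\mathcal{P}]^{-1}$ and $[\mathcal{P}]^{3}=[\pi_1\mathcal{O}_{\mathrm{k}}]=1$; moreover $\mathcal{P}$ is the unique prime of $\mathrm{k}$ above $\pi_1\in\mathrm{k}_0$, so $\mathcal{P}^{\sigma}=\mathcal{P}$. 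Thus $[\mathcal{P}]\in C_{\mathrm{k},3}^{(\sigma)}$, and applying $\tau$ to $[\mathcal{Q}]=[\mathcal{P}]^{-1}$ gives $\tau[\mathcal{P}]=[\mathcal{Q}]=[\mathcal{P}]^{-1}$, i.e. $[\mathcal{P}]\in\bigl(C_{\mathrm{k},3}^{(\sigma)}\bigr)^{-}$.

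For the equivalence $1)\Leftrightarrow 2)$ I would combine Lemma \ref{32}, which gives $\vert C_{\mathrm{k},3}\vert=3^{s}$ and $\vert C_{L,3}\vert=\vert C_{\mathrm{k},3}^{+}\vert=3^{\lceil s/2\rceil}$, with the Barrucand--Cohn formula $\vert C_{\mathrm{k},3}\vert=\tfrac{u}{3}\vert C_{L,3}\vert^{2}$ of \cite[\S\ 14, Theorem 14.1]{B-C}: if $s$ is even this reads $3^{s}=\tfrac{u}{3}\,3^{s}$, forcing $u=3$, while if $s$ is odd it reads $3^{s}=\tfrac{u}{3}\,3^{s+1}$, forcing $u=1$.

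For $1)\Leftrightarrow 3)$ I would work inside the ambiguous group. By Lemma \ref{11}, $\vert C_{\mathrm{k},3}^{(\sigma)}\vert=3$, so $C_{\mathrm{k},3}^{(\sigma)}$ is purely of plus type or purely of minus type. If $\mathcal{P}$ is not principal, then $[\mathcal{P}]\neq 1$ generates $C_{\mathrm{k},3}^{(\sigma)}$, which is therefore of minus type by the first paragraph; hence $\bigl(C_{\mathrm{k},3}^{(\sigma)}\bigr)^{+}=1$ and Lemma \ref{22}(ii) yields $u=3$. Conversely, assume $\mathcal{P}$ is principal, i.e. $[\mathcal{P}]=[\mathcal{Q}]=1$; I would show $C_{\mathrm{k},3}^{(\sigma)}$ is then of plus type, so that Lemma \ref{22}(i) gives $u=1$. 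Here I invoke genus theory for $\mathrm{k}/\mathrm{k}_0$: the subgroup of $C_{\mathrm{k},3}^{(\sigma)}$ generated by the classes of the ramified primes is $\langle[\mathcal{P}],[\mathcal{Q}],[\mathcal{L}]\rangle$, where $\mathcal{L}$ (present only when $p\equiv 4,7\pmod 9$) satisfies $\mathcal{L}^{3}=\lambda\mathcal{O}_{\mathrm{k}}$; under our assumption this subgroup equals $\langle[\mathcal{L}]\rangle$. If $p\equiv 4,7\pmod 9$ then $q^{*}=0$ (proof of Lemma \ref{11}), so $C_{\mathrm{k},3}^{(\sigma)}$ coincides with $\langle[\mathcal{L}]\rangle$, which is then nontrivial, and since $\lambda^{\tau}\mathcal{O}_{\mathrm{k}_0}=\lambda\mathcal{O}_{\mathrm{k}_0}$ we have $\mathcal{L}^{\tau}=\mathcal{L}$, so $[\mathcal{L}]$ is of plus type. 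If $p\equiv 1\pmod 9$ then $q^{*}=1$, so $\zeta_3=\mathcal{N}_{\mathrm{k}/\mathrm{k}_0}(\gamma)$ for some $\gamma\in\mathrm{k}^{\times}$; as the ideal group of $\mathrm{k}$ is a cohomologically trivial $\langle\sigma\rangle$-permutation module there is an ideal $\mathfrak{a}$ with $\mathfrak{a}^{1-\sigma}=(\gamma)$, and $[\mathfrak{a}]$ is an ambiguous class which generates $C_{\mathrm{k},3}^{(\sigma)}$ since the strongly ambiguous classes are trivial here. Using $\mathcal{N}_{\mathrm{k}/\mathrm{k}_0}(\gamma^{\tau})=\zeta_3^{-1}$, Hilbert's Theorem 90, and the relation $\tau\sigma=\sigma^{2}\tau$, one shows that $\tau[\mathfrak{a}]$ and $[\mathfrak{a}]$ differ by a strongly ambiguous class, hence are equal; so $[\mathfrak{a}]$ is of plus type. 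In all cases $C_{\mathrm{k},3}^{(\sigma)}$ is of plus type, which completes the argument.

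The hard part will be exactly the last point: controlling the $\tau$-type of the ``unit part'' of the ambiguous class group when $p\equiv 1\pmod 9$, i.e. proving that the ambiguous class supplied by $\zeta_3$ being a relative norm from $\mathrm{k}$ is fixed by complex conjugation. Everything else is bookkeeping with the factorisation $p\mathcal{O}_{\mathrm{k}}=\mathcal{P}^{3}\mathcal{Q}^{3}$ or a direct appeal to Lemmas \ref{11}, \ref{22}, \ref{32} and the Barrucand--Cohn index formula.
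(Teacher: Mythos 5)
Your proposal is correct in substance and shares the paper's skeleton (Lemma \ref{11} giving $\vert C_{\mathrm{k},3}^{(\sigma)}\vert=3$, the $\tau$-type dichotomy of Lemma \ref{22}, the order computations from Lemma \ref{32}, the Barrucand--Cohn formula, and the observation that $[\mathcal{P}]^{\tau}=[\mathcal{P}]^{-1}$ because $\mathcal{P}\mathcal{Q}=(\sqrt[3]{p})$, which is exactly the paper's use of $[\mathcal{P}\mathcal{Q}^{2}]$), but it is organized differently at the one genuinely delicate point. The paper argues cyclically $1)\Rightarrow 2)\Rightarrow 3)\Rightarrow 1)$, and in the step $2)\Rightarrow 3)$, case $p\equiv 1\pmod 9$, it disposes of the possibility ``$\mathcal{P}$ principal'' by quoting Gras \cite[Proposition 2, part (1), p. 679]{Gras} to assert that the generator of $C_{\mathrm{k},3}^{(\sigma)}$ then lies in the plus part, after which Lemma \ref{22} gives the parity contradiction. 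You instead prove $1)\Leftrightarrow 2)$ directly in both directions from $\vert C_{\mathrm{k},3}\vert=3^{s}$, $\vert C_{L,3}\vert=\vert C_{\mathrm{k},3}^{+}\vert$ and $\vert C_{\mathrm{k},3}\vert=\frac{u}{3}\vert C_{L,3}\vert^{2}$ \cite{B-C}, and you replace the citation of Gras by a hands-on construction: when $q^{*}=1$ you produce an ambiguous class $[\mathfrak{a}]$ from $\zeta_3=\mathcal{N}_{\mathrm{k}/\mathrm{k}_0}(\gamma)$ and show it is $\tau$-fixed. That route does work --- applying $\tau$ to $\mathfrak{a}^{1-\sigma}=(\gamma)$ gives $(\mathfrak{a}^{\tau})^{1-\sigma}=(\tau(\gamma^{1+\sigma}))$ whose relative norm is $\tau(\zeta_3^{2})=\zeta_3$, the same invariant as for $\mathfrak{a}$ --- but be aware that everything hinges on the exact sequence $1\to S_{\mathrm{k},3}^{(\sigma)}\to C_{\mathrm{k},3}^{(\sigma)}\to (E_{\mathrm{k}_0}\cap\mathcal{N}_{\mathrm{k}/\mathrm{k}_0}(\mathrm{k}^{\times}))/\mathcal{N}_{\mathrm{k}/\mathrm{k}_0}(E_{\mathrm{k}})\to 1$: injectivity modulo $S_{\mathrm{k},3}^{(\sigma)}$ is what makes ``differ by a strongly ambiguous class'' and the nontriviality of $[\mathfrak{a}]$ legitimate, and this refined ambiguous-class-number statement must be quoted or proved (it is essentially what the appeal to Gras buys the paper for free). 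Two small inaccuracies to fix in a final write-up: a permutation module is not ``cohomologically trivial'' --- you only need, and only have, $\hat{H}^{-1}(\langle\sigma\rangle, I_{\mathrm{k}})=0$; and $\mathfrak{a}$ should be adjusted to its $3$-primary component so that $[\mathfrak{a}]$ actually lies in $C_{\mathrm{k},3}^{(\sigma)}$. With these points supplied, your argument is a self-contained alternative to the paper's reliance on \cite{Gras}, at the cost of more cohomological bookkeeping; the paper's version is shorter but less transparent about why the plus-type conclusion holds.
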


\begin{proof} Let  $C_{\mathrm{k},3}^{(\sigma)}$ be the group of ambiguous ideal classes of $\mathrm{k}/\mathrm{k}_0$.
Since $p \equiv 1 (\bmod \ 3)$, then by Lemma \ref{11}, $\vert C_{k,3}^{(\sigma)}\vert=3$.
If we denote by $I_{\mathrm{k}}$ the group of fractional ideals of $\mathrm{k}$, we let 
$S_{\mathrm{k},3}^{(\sigma)}=\lbrace \mathcal{A} \in C_{\mathrm{k},3} ~ | ~ \exists \mathcal{B} \in I_{\mathrm{k}} \text{ such that } \mathcal{A}=[\mathcal{B}] \text{ and } \mathcal{B}^{1-\sigma}=(1)  \rbrace$
be the group of strongly ambiguous ideal classes of the cyclic extension $\mathrm{k} / \mathrm{k}_0$.
It is known that $S_{\mathrm{k},3}^{(\sigma)}$ is generated by the ideal classes of the primes ramified in $\mathrm{k} / \mathrm{k}_{0}$.

\begin{itemize}
\item[$1)\Rightarrow 2): $]  
Let $h_{L,3}$ (resp. $h_{\mathrm{k},3}$)
be the $3$-class number of $L$ (resp $\mathrm{k}$).
By \cite[\S\ 14, Theorem 14.1, p. 232]{B-C}
we have $h_{\mathrm{k},3}= \dfrac{u}{3}h_{L,3}^2$.
If $u=3$, then $ h_{\mathrm{k},3}$ is a square, so the integer $s$ is even.

\item[$2)\Rightarrow 3): $]  
First, since  $p\equiv 1~(\bmod~3)$, then by \cite[\S\ 9, Section 1, prop. 9.1.4, p.110]{Clas} we have
$p=\pi_1\pi_2$, where $\pi_1$ and $\pi_2$ are two primes of $\mathrm{k}_0$ such that
$\pi_1=\pi_2^{\tau}$ and $\pi_2=\pi_1^{\tau}$ and $\pi_1 \equiv \pi_2 \equiv 1 ~(\bmod~3\mathcal{O}_{\mathrm{k}_0})$.
Furthermore,
$\pi_1$ and $\pi_2$ are ramified in $\mathrm{k} / \mathrm{k}_{0}$,
so there exist two prime ideals $\mathcal{P}$ and $\mathcal{Q}$ of $\mathrm{k}$
such that $\pi_1\mathcal{O}_{\mathrm{k}}=\mathcal{P}^{3}$ and $\pi_2\mathcal{O}_{\mathrm{k}}=\mathcal{Q}^{3}$.
Then
$p\mathcal{O}_{\mathrm{k}}=\mathcal{P}^{3}\mathcal{Q}^{3}$. \\
Next, let $s$ be even, where $s$ is the non-null positive integer defined in Lemma \ref{32}. 
We shall prove that the prime ideal $\mathcal{P}$ is not principal.
Assume that $\mathcal{P}$ is principal. 
The fact that $p \equiv 1 \ (\bmod \ 3)$ implies that $\vert C_{\mathrm{k},3}^{(\sigma)}\vert= 3 $ according to Lemma \ref{11}. 
We study all cases in dependence on the congruence class of $p$ modulo $9$:
\begin{itemize}
\item
If $p \equiv 1 \ (\bmod \ 9)$, then $3$ is decomposes in $L$ by \cite[\S\ 4, pp. 51-55]{DED}.
The prime ideals which ramify in $\mathrm{k}/ \mathrm{k}_0$ are $(\pi_{1})$ and $(\pi_{2})$.
Since $\mathcal{P}$ is principal, the prime ideal $\mathcal{Q}$ is also principal.
Thus a generator of $ C_{\mathrm{k},3}^{(\sigma)}$ does not contain
the classes of prime ideals lying above the primes ramified in the extension $\mathrm{k} / \mathrm{k}_{0}$.
So, a generator of $ C_{\mathrm{k},3}^{(\sigma)}$ comes from $ C_{\mathrm{k},3}^{+}$
(cf. \cite[ Proposition 2, part (1), p. 679]{Gras}).
This implies that
$$\vert \left(
C_{\mathrm{k},3}^{(\sigma)} \right)^{+}\vert=3 \ \ \text{and} \ \ \vert \left(
C_{\mathrm{k},3}^{(\sigma)} \right)^{-}\vert=1,$$
where $\left(C_{\mathrm{k},3}^{(\sigma)} \right)^{+}$ and $\left(C_{\mathrm{k},3}^{(\sigma)} \right)^{-}$ are defined in  \cite[\S\ 2, Lemma 2.1, p. 53]{GERTH2}.
By Lemma \ref{22} we see that the integer $s$ is odd.
This contradicts the fact that $s$ is even.
Thus $\mathcal{P}$ is not principal.

\item
If $p \equiv 4\ \text{or} \ 7 \ (\bmod \ 9)$, then $3$ is ramified in $L$ by \cite[\S\ 4, pp. 51-55]{DED},
so the prime ideals which ramify in $\mathrm{k}/ \mathrm{k}_0$ are $(1-\zeta_3)$, $(\pi_{1})$ and  $(\pi_{2})$.
Let $I$ be the unique prime ideal above $(1-\zeta_3)$ in $\mathrm{k}$.
The fact that $p \equiv 4\ \text{or} \ 7 \ (\bmod \ 9)$ implies according to \cite[\S\ 3, Lemma 3.1, p. 16]{AMITA} that
$\pi_{i}\not\equiv 1 \ ( \bmod \ (1-\zeta_3)^3 )$ for $i= \lbrace 1, 2\rbrace$.
Thus, according to \cite[\S\ 3, Lemma 3.3, p. 17]{AMITA} we get $\zeta_3$ is not a norm in the extension $\mathrm{k} / \mathrm{k}_{0} $,
so $S_{\mathrm{k},3}^{(\sigma)}=C_{\mathrm{k},3}^{(\sigma)}$.
Hence, $C_{\mathrm{k},3}^{(\sigma)}$ is generated by the ideal classes of the primes ramified in $\mathrm{k} / \mathrm{k}_{0}$.

\begin{itemize}
\item
If $I$ is not principal, then the ideal class $[I]$ of $I$ generates the group $ C_{\mathrm{k},3}^{(\sigma)}$ and
it is not contained in $C_{\mathrm{k},3}^{1-\sigma}$.
We see that $s=1$ which contradicts the fact that $s$ is even.
\item
If $I$ is principal, and since $\mathcal{P}$ and $\mathcal{Q}$ are also principal, then
 $ C_{\mathrm{k},3}^{(\sigma)}$ will be reduced to $\lbrace 1\rbrace$, which contradict the fact that $\vert C_{k,3}^{(\sigma)}\vert=3$.
\end{itemize}
Hence, the prime ideal $\mathcal{P}$ is not principal.
\end{itemize}

\item[$3)\Rightarrow 1): $]   We note that $\mathcal{P}\mathcal{Q}=(\sqrt[3]{p})$ is a principal ideal.
Assume that $\mathcal{P}$ is not principal, then the prime ideal $\mathcal{Q}=\mathcal{P}^{\tau}$ is also not principal.
Then $\mathcal{P}\mathcal{Q}^{2}$ is also not principal
and the ideal class of $\mathcal{P}\mathcal{Q}^{2}$ generates the group $ C_{\mathrm{k},3}^{(\sigma)}$.
In addition, we have
$\left(\mathcal{P}\mathcal{Q}^{2}\right)^{\tau}=\mathcal{Q}\mathcal{P}^{2}$
which implies that $[\mathcal{P}\mathcal{Q}^{2}]^{\tau}=[\mathcal{Q}\mathcal{P}^{2}]=[\mathcal{P}\mathcal{Q}^{2}]^{-1}$.
Thus, $[\mathcal{P}\mathcal{Q}^{2}]\in\left(C_{\mathrm{k},3}^{(\sigma)}\right)^{-} $.
According to  \cite[\S\ 2, Lemma 2.1, p. 53]{GERTH2},
$C_{\mathrm{k},3}^{(\sigma)}\simeq
\left(C_{\mathrm{k},3}^{(\sigma)} \right)^{+} \times
\left(C_{\mathrm{k},3}^{(\sigma)} \right)^{-} $.
Since $p \equiv 1 \ (\bmod \ 3)$, then by Lemma \ref{11}, $ \vert C_{\mathrm{k},3}^{(\sigma)}\vert=3$,
and as $[\mathcal{P}\mathcal{Q}^{2}]\in \left(C_{\mathrm{k},3}^{(\sigma)}\right)^{-}$, we obtain:
$$\vert \left( C_{\mathrm{k},3}^{(\sigma)} \right)^{+}\vert=1 \ \
\text{and} \ \ \vert \left( C_{\mathrm{k},3}^{(\sigma)}
\right)^{-}\vert=3.$$
We see that the integer $s$ is even, so the class number $h_{\mathrm{k},3}$ is a square. Hence $u=3$.

\end{itemize}

\end{proof}

\begin{lemma}
\label{lem:symb and u}
Let $\mathrm{k}=\mathbb{Q}(\sqrt[3]{p},\zeta_3)$,
where $p$ is a prime number such that $p\equiv 4~or~7~(\bmod~9)$.
Let $u$ be the index of subfield units defined as above. Then:
$$    \left(\frac{3}{p} \right)_3 \neq 1 \Leftrightarrow ( 3 \parallel |C_{L,3}| \ \text{and} \ u=3),$$
where $\left(\frac{.}{p} \right)_3$ is the cubic residue symbol.
\end{lemma}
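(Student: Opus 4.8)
The plan is to convert the right-hand side into a statement about the structure of $C_{\mathrm{k},3}$ and then to pin that structure down by a cubic reciprocity computation. By Lemma~\ref{33}, for $p\equiv 4,7\pmod 9$ the condition ``$3\parallel|C_{L,3}|$ and $u=3$'' is equivalent to $C_{\mathrm{k},3}\simeq(\mathbb{Z}/3\mathbb{Z})^{2}$; since $p\equiv 1\pmod 3$ gives $|C_{\mathrm{k},3}^{(\sigma)}|=3$ by Lemma~\ref{11}, Lemma~\ref{32} shows this is in turn equivalent to $s=2$. Moreover Lemma~\ref{NewLemma} already identifies $u=3$ with ``$s$ even'' and with ``$\mathcal{P}$ is not principal'', where $p\mathcal{O}_{\mathrm{k}}=\mathcal{P}^{3}\mathcal{Q}^{3}$, $\mathcal{Q}=\mathcal{P}^{\tau}$, and $\pi_1\mathcal{O}_{\mathrm{k}}=\mathcal{P}^{3}$. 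So the lemma is reduced to proving, for $p\equiv 4,7\pmod 9$, that the value of $\left(\frac{3}{p}\right)_3$ (i) decides whether $\mathcal{P}$ is principal, i.e.\ whether $s$ is even, and (ii) in the non-principal case forces $s=2$ rather than $s\ge 4$.

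For part (i) I would use the genus-theoretic picture of the proof of Lemma~\ref{32}: $\mathrm{k}/\mathrm{k}_0$ is the cyclic Kummer extension $\mathrm{k}_0(\sqrt[3]{\pi_1\pi_2})/\mathrm{k}_0$, its relative genus field is $(\mathrm{k}/\mathrm{k}_0)^{*}=\mathrm{k}(\sqrt[3]{\pi_1\pi_2^{2}})$, and $\operatorname{Gal}(\mathrm{k}_3^{(1)}/(\mathrm{k}/\mathrm{k}_0)^{*})$ corresponds under Artin to the principal genus $C_{\mathrm{k},3}^{1-\sigma}$. Hence $s\ge 2$ holds iff the generator of $C_{\mathrm{k},3}^{(\sigma)}$ lies in $C_{\mathrm{k},3}^{1-\sigma}$, i.e.\ iff its Artin symbol in $(\mathrm{k}/\mathrm{k}_0)^{*}/\mathrm{k}$ is trivial. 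Since $p\equiv 4,7\pmod 9$ forces $q^{*}=0$, the group $C_{\mathrm{k},3}^{(\sigma)}$ coincides with the strongly ambiguous subgroup and is generated by the classes of the primes ramified in $\mathrm{k}/\mathrm{k}_0$ — the prime $I$ above $(1-\zeta_3)$ together with $\mathcal{P}$ and $\mathcal{Q}$; as $[\mathcal{Q}]=[\mathcal{P}]^{-1}$, $[I]^{\tau}=[I]$ and $[\mathcal{P}]^{\tau}=[\mathcal{P}]^{-1}$, the generator is $[\mathcal{P}]$ when $\mathcal{P}$ is non-principal and $[I]$ otherwise. Evaluating the associated cubic power-residue symbol of $\pi_1\pi_2^{2}$ and feeding it through cubic reciprocity in $\mathrm{k}_0$, together with the identity $3=-\zeta_3^{2}(1-\zeta_3)^{2}$ — which is exactly what links the prime $I$ over $3$ to the rational integer $3$ and so produces $\left(\frac{3}{p}\right)_3=\left(\frac{3}{\pi_1}\right)_3\left(\frac{3}{\pi_2}\right)_3$ — one reads off the value of $\left(\frac{3}{p}\right)_3$ that makes this symbol trivial. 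Comparison with Conjecture~\ref{Conj1}(2) fixes which value that is, giving (i).

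For part (ii), once $\mathcal{P}$ is non-principal $s$ is even, and to exclude $s\ge 4$ it is enough to check that the generator of $C_{\mathrm{k},3}^{(\sigma)}$, while in the principal genus, is not a cube in $C_{\mathrm{k},3}$, because $C_{\mathrm{k},3}^{(1-\sigma)^{2}}=C_{\mathrm{k},3}^{3}$. This is a ``second-layer'' computation of the same cubic-reciprocity flavour, carried out modulo a higher power of the relevant prime; it can also be extracted from the partial results of Barrucand, Williams and Baniuk \cite{BWB}. Combining (i) and (ii) with the reduction of the first paragraph yields the asserted equivalence.

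The main obstacle is the reciprocity bookkeeping in part (i): one must keep careful track of the $(1-\zeta_3)$-adic congruence classes of $\pi_1$ and $\pi_2$ — the hypothesis $p\equiv 4,7\pmod 9$ is used precisely to guarantee $\pi_i\not\equiv 1\pmod{(1-\zeta_3)^{3}}$, which is what puts $I$ among the ramified primes and forces $q^{*}=0$ — and then translate an Artin symbol in the Kummer extension $\mathrm{k}(\sqrt[3]{\pi_1\pi_2^{2}})/\mathrm{k}$ into the rational cubic residue symbol $\left(\frac{3}{p}\right)_3$ through $3=-\zeta_3^{2}(1-\zeta_3)^{2}$. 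The ``no deeper'' step (ii) is a secondary difficulty of the same kind.
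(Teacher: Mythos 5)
The paper does not actually prove this lemma: its ``proof'' is the single sentence citing Ismaili and El Mesaoudi, \cite[Thm.~3.2, assertion (1), case $e=0$]{IsEM2}. So any self-contained argument is necessarily a different route, and the reduction you set up is legitimate: Lemmas \ref{33}, \ref{11}, \ref{32} and \ref{NewLemma} are proved independently of the present lemma, and they do reduce the statement to showing that the value of $\left(\frac{3}{p}\right)_3$ decides whether the generator of $C_{\mathrm{k},3}^{(\sigma)}$ lies in the principal genus, and, when it does, that $s=2$ exactly.

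But those two remaining steps are precisely the content of the lemma, and neither is carried out. In (i) the evaluation of the Artin symbol of $[\mathcal{P}]$ (resp.\ $[I]$) in the unramified extension $\mathrm{k}(\sqrt[3]{\pi_1\pi_2^{2}})/\mathrm{k}$ is only announced; note that $\mathcal{P}$ divides $\pi_1$, so this is not the naive Kummer symbol $\bigl(\frac{\pi_1\pi_2^{2}}{\mathcal{P}}\bigr)_3$ and the radicand must first be adjusted by cubes before any reciprocity bookkeeping can begin. More seriously, you fix the decisive value ``by comparison with Conjecture \ref{Conj1}(2)'': that is circular, since in this paper Conjecture \ref{Conj1}(2) is deduced from the present lemma. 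This calibration cannot be outsourced, because it is the entire arithmetic content; in fact Conjecture \ref{Conj1}(2) pairs $\left(\frac{3}{p}\right)_3\neq 1$ with $C_{\mathrm{k},3}\simeq\mathbb{Z}/3\mathbb{Z}$, hence with $u=1$ (this is also how the lemma is applied in \S\ \ref{ss:Conj1} and what the numerical tables show), which is the opposite of the pairing printed in the statement, so ``reading off'' the value from the conjecture would not even reproduce the equivalence you are asked to prove. Finally, step (ii) (ruling out $s\geq 4$, i.e.\ $9\mid h_L$) is deferred to \cite{BWB}; but by the paper's own introduction the case $p\equiv 4,7\pmod 9$ was only partially settled there and remained open, and the paper's own proof that $3\parallel h_L$ in \S\ \ref{ss:Conj1} itself quotes the present lemma, so neither source can be borrowed without circularity. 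As it stands your text is a plausible plan rather than a proof; the actual backing for the statement is the external result \cite[Thm.~3.2]{IsEM2}.
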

\begin{proof}
 Lemma \ref{lem:symb and u} is due to Ismaili and El Mesaoudi for $e=0$ in assertion $(1)$ of
\cite[Thm. 3.2, p. 104]{IsEM2}.
\end{proof}



\subsection{Final Proof of Conjecture \ref{Conj1}}
\label{ss:Conj1}

Let $L= \mathbb{Q}(\sqrt[3]{p})$ be a pure cubic field,
where $p$ is a prime number such that $p \equiv 1(\bmod~ 3)$,
and let $\mathrm{k}=\mathbb{Q}(\sqrt[3]{p},\zeta_3)$ be its normal closure.
Let $C_{L,3}$ ( resp. $C_{\mathrm{k},3}$) be the $3$-component of the class group of $L$ ( resp $\mathrm{k}$).\\

$1)$ It is clear that $C_{L,3}$ is  a cyclic group when $s=1$.
Assume that $s\geq 2$.
From the  proof of Lemma \ref{32},
$C_{\mathrm{k},3}^{+}$ and $C_{\mathrm{k},3}^{-}$ are two non-trivial groups,
and according to Lemma \ref{32} we deduce that
$C_{\mathrm{k},3}^{+}$ is a cyclic group,
because otherwise the rank of $C_{\mathrm{k},3}$ will be greater than or equal to $3$.
According to  \cite[\S\ 2, Lemma 2.2, p. 53]{GERTH2}, we conclude that $C_{L,3}$ is a cyclic group.\\

Now, let us show that $p \equiv 1(\bmod~ 9)$
when $C_{L,3}$ contains a cyclic subgroup of order $9$.
This equivalent to the statement that if $p \equiv 4 \text{ or } 7(\bmod~ 9)$ then the class number of $L$ is divisible exactly by $3$. \\
Assume that $p \equiv 4 \text{ or } 7(\bmod~ 9)$ and $9$ divides the class number of $L$,
then $\left(\frac{3}{p} \right)_3=1$.
In fact, if $\left(\frac{3}{p} \right)_3 \neq 1$,
then from the proof of Lemma \ref{lem:symb and u} the class number of $L$ is divisible exactly by $3$ which is absurd. \\
Since  $p=\pi_1\pi_2$, where $\pi_1$ and $\pi_2$ are two primes of $\mathrm{k}_{0}$ such that
$\pi_2=\pi_1^{\tau}$,
then the equations $3 \equiv X^3 \pmod {\pi_1}$ and $ 3 \equiv X^3 \pmod {\pi_2}$ are solvable in $\mathcal{O}_{\mathrm{k}_{0}}$ (cf. \cite[\S\ 9, Section 3, prop. 9.3.3, p.112]{Clas}),
so $3$ is a cubic residue modulo $\pi_1$ and $3$ is a cubic residue modulo $\pi_2$.
According to \cite[\S\ 9, Section 1, prop. 9.1.4, p.110]{Clas}, we have $3=-\zeta_3^2\lambda^2$, where $\lambda=1-\zeta_3$ is a prime of $\mathrm{k}_0$, thus:

$$ \left(\frac{-\zeta_3^2\lambda^2}{\pi_1} \right)_3=\left(\frac{-\zeta_3^2\lambda^2}{\pi_2} \right)_3=1. $$

Since $-1$ is a norm in $ \mathrm{k}_{0}(\sqrt[3]{\pi_1})/\mathrm{k}_{0}$ and a norm in $ \mathrm{k}_{0}(\sqrt[3]{\pi_2})/\mathrm{k}_{0}$, then

      $$ \left(\frac{-1}{\pi_1} \right)_3=\left(\frac{-1}{\pi_2} \right)_3=1, $$
      so
      $$ \left(\frac{\zeta_3^2\lambda^2}{\pi_1} \right)_3=\left(\frac{\zeta_3^2\lambda^2}{\pi_2} \right)_3=1. $$

Hence

           $$ \left(\frac{\zeta_3\lambda}{\pi_1} \right)_3=\left(\frac{\zeta_3\lambda}{\pi_2} \right)_3=1.$$

The fields $\mathrm{k}_{0}(\sqrt[3]{\lambda}) $ and $ \mathrm{k}_{0}(\sqrt[3]{\zeta_3})$ are different. Put $F_1=\mathrm{k}_{0}(\sqrt[3]{\zeta_3})$ and $F_2=\mathrm{k}_{0}(\sqrt[3]{\lambda})$. We have $F_1\neq F_2$, and $ F_1 \cap F_2= \mathrm{k}_{0}$. Let
   $F=  F_1 F_2= \mathrm{k}_{0}(\sqrt[3]{\zeta_3}, \sqrt[3]{\lambda})$.
Then in the subfield diagram
    \[
\xymatrix@R=0,7cm{
 {}&{}&{}\\
       {}&{}& F=  F_1 F_2 \ar@{-}[rd] \ar@{-}[ld]&{}&{}\\
       {}& F_1=\mathrm{k}_{0}(\sqrt[3]{\zeta_3}) \ar@{-}[rd]&{}&  F_2=\mathrm{k}_{0}(\sqrt[3]{\lambda}) \ar@{-}[ld] &  &   \\
       {}& {}&\mathrm{k}_{0}=\mathbb{Q}(\zeta_3)& {}& {}\\
  }
\]

we have
$\operatorname{Gal}\left( F_1/\mathrm{k}_{0} \right) $ and $\operatorname{Gal}\left( F_2/\mathrm{k}_{0} \right) $ are cyclic of order $3$,
and $\operatorname{Gal}\left( F/\mathrm{k}_0 \right) $ is abelian.
Since $\pi_1$ is not ramified in $F_1$ and $F_2$, then $\pi_1$ is also not ramified in $F$.
We will calculate the Artin symbol $\left(\dfrac{F/\mathrm{k}_0}{\pi_1} \right)$.\\
On the one hand:
       \begin{eqnarray*}
       \left(\dfrac{F/\mathrm{k}_0}{\pi_1} \right) & = & \left(\dfrac{F_1F_2/\mathrm{k}_0}{(\pi_1)} \right) \\
         & =& \left(\dfrac{F_1/\mathrm{k}_0}{(\pi_1)} \right)\left(\dfrac{F_2/\mathrm{k}_0}{(\pi_1)} \right) \\
         & =& \left(\dfrac{\mathrm{k}_{0}(\sqrt[3]{\zeta_3})/\mathrm{k}_0}{(\pi_1)} \right)\left(\dfrac{\mathrm{k}_{0}(\sqrt[3]{\lambda})/\mathrm{k}_0}{(\pi_1)} \right) \\
         & =& \left(\dfrac{\zeta_3}{\pi_1} \right)_3\left(\dfrac{\lambda}{\pi_1} \right)_3 \\
       & =& \left(\dfrac{\zeta_3 \lambda}{\pi_1} \right)_3 \\
       & = & 1.
       \end{eqnarray*}
On the other hand, since $\mathrm{k}_{0}(\sqrt[3]{\zeta_3})=\mathrm{k}_{0}(\sqrt[3]{\zeta_3^2})$, we get:
     \begin{eqnarray*}
       \left(\dfrac{F/\mathrm{k}_0}{\pi_1} \right) & = & \left(\dfrac{F_1F_2/\mathrm{k}_0}{(\pi_1)} \right) \\
         & = & \left(\dfrac{F_1/\mathrm{k}_0}{(\pi_1)} \right)\left(\dfrac{F_2/\mathrm{k}_0}{(\pi_1)} \right) \\
         & = & \left(\dfrac{\mathrm{k}_{0}(\sqrt[3]{\zeta_3^2})/\mathrm{k}_0}{(\pi_1)} \right)\left(\dfrac{\mathrm{k}_{0}(\sqrt[3]{\lambda})/\mathrm{k}_0}{(\pi_1)} \right) \\
         & = & \left(\dfrac{\zeta_3^2}{\pi_1} \right)_3\left(\dfrac{\lambda}{\pi_1} \right)_3 \\
       & = & \left(\dfrac{\zeta_3^2 \lambda}{\pi_1} \right)_3
       \end{eqnarray*}
We see that  $\left(\dfrac{\zeta_3^2 \lambda}{\pi_1} \right)_3 =1$,
and since $ \left(\dfrac{\zeta_3 \lambda}{\pi_1} \right)_3=1 $, we conclude that
       \begin{equation} \label{pi1}
        \left(\dfrac{\zeta_3}{\pi_1} \right)_3=1,
       \end{equation}
However, since $\pi_2$ is not ramified in $F_1$ and $F_2$,
then $\pi_2$ is also not ramified in $F$.
As above, we calculate the Artin symbol $\left(\dfrac{F/\mathrm{k}_0}{\pi_2} \right)$,
we obtain $\left(\dfrac{\zeta_3^2 \lambda}{\pi_2} \right)_3 =1$,
and since $ \left(\dfrac{\zeta_3 \lambda}{\pi_2} \right)_3=1 $, we get
       \begin{equation} \label{pi2}
        \left(\dfrac{\zeta_3}{\pi_2} \right)_3=1,
       \end{equation}
From the cubic symbols (\ref{pi1}) and (\ref{pi2}) we conclude according to Lemma \ref{14s} that $ \left(\dfrac{\zeta_3}{p} \right)_3=1$, so by Lemma \ref{14ss} we obtain  $p \equiv 1 \pmod 9$
which contradicts the hypothesis that $p \equiv 4 \ \text{or} \ 7  \pmod 9$.
Hence, if $p \equiv 4 ~or ~7(\bmod~ 9)$ then the class number of $L$ is divisible exactly by $3$.\\

$2)$
Let $p$ be a prime number such that $p \equiv 4$ or $7(\bmod~ 9)$.
\begin{itemize}
\item If $\left(\frac{3}{p} \right)_3 \neq 1$, then according to Lemma \ref{lem:symb and u} we get $|C_{L,3}| = 3$ and  $u=1$,
and by Lemma \ref{33} we deduce that
$C_{\mathrm{k},3}\simeq \mathbb{Z}/3\mathbb{Z}$.\\
\item If $\left(\frac{3}{p} \right)_3 = 1$,
then by Lemma \ref{lem:symb and u} we have either $3$ is not divide $|C_{L,3}|$ or $u=3$. Assume that $3$ is not divide $|C_{L,3}|$, then according to \cite[\S 1, Thm, p. 8]{HONDA} the prime $p$ is congruent to   
$-1 \pmod 3$, which is a contradiction because $p \equiv 4 ~or ~7(\bmod~ 9)$. Hence, we have necessary $u=3$ and $3$ divide $|C_{L,3}|$, then $|C_{L,3}| = 3$ from assertion $1)$ above.
By Lemma \ref{33} we deduce that $C_{\mathrm{k},3}\simeq \mathbb{Z}/3\mathbb{Z}\times \mathbb{Z}/3\mathbb{Z} $.
\end{itemize}

\paragraph{}
$3)$
According to Lemma \ref{32}, we get
$\rank  C_{\mathrm{k},3}\in \{1, 2\}$.
On the one hand, for the prime numbers $p=271$ and $p=307$, we have $\left(\frac{3}{p} \right)_3=1$ and $p \equiv 1 (\bmod \ 9)$,
moreover for $p= 271, \ \rank  C_{\mathrm{k},3}=2$ and for $p=307, \ \rank  C_{\mathrm{k},3}=1$.\\
On the other hand, the primes $p=379$ and $p=487$ satisfy $\left(\frac{3}{p} \right)_3\neq 1$ and $p \equiv 1 (\bmod \ 9)$,
moreover for $p= 379, \ \rank  C_{\mathrm{k},3}=1$ and for $p=487, \ \rank  C_{\mathrm{k},3}=2$.\\
This shows that, if $p\equiv 1 (\bmod~ 9)$,
then $\rank C_{\mathrm{k},3} \in \{ 1,2\}$, independently of the value of $\left(\frac{3}{p} \right)_3$.

\paragraph{}
From Lemmas \ref{NewLemma} and \ref{lem:symb and u}, we propose the following Corollary and Proposition:

\begin{corollary}
\label{Corollary}
Let $\mathrm{k}=\mathbb{Q}(\sqrt[3]{p},\zeta_3)$,
where $p$ is a prime number such that $p\equiv 4~or~7~(\bmod~9)$.
Let $u$ be the index of subfield units defined as above. Then:
$$    \left(\frac{3}{p} \right)_3 \neq 1 \Leftrightarrow u=3,$$
where $\left(\frac{.}{p} \right)_3$ is the cubic residue symbol.
\end{corollary}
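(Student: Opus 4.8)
The plan is to deduce Corollary \ref{Corollary} directly from Lemma \ref{lem:symb and u}, by observing that under the hypothesis $p\equiv 4$ or $7\,(\bmod\,9)$ the side condition ``$3\parallel|C_{L,3}|$'' appearing in that lemma is automatically satisfied, so it may simply be dropped from the equivalence. First I would dispose of the forward implication, which is immediate: if $\left(\frac{3}{p}\right)_3\neq 1$, then Lemma \ref{lem:symb and u} yields at once that $3\parallel|C_{L,3}|$ and $u=3$, hence in particular $u=3$.

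For the converse, suppose $u=3$ and argue that $\left(\frac{3}{p}\right)_3\neq 1$. The cleanest route is to invoke assertion $1)$ of the theorem just proved: for $p\equiv 4$ or $7\,(\bmod\,9)$ the class number of $L$ is divisible exactly by $3$, that is, $3\parallel|C_{L,3}|$ holds unconditionally (this is exactly what the Artin-symbol computation together with Lemmas \ref{14s} and \ref{14ss} established). Since both ``$3\parallel|C_{L,3}|$'' and ``$u=3$'' now hold, the right-hand side of the equivalence in Lemma \ref{lem:symb and u} is true, and therefore so is its left-hand side: $\left(\frac{3}{p}\right)_3\neq 1$. Combining the two implications gives $\left(\frac{3}{p}\right)_3\neq 1\Leftrightarrow u=3$, as claimed.

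As an alternative to quoting assertion $1)$, one can argue by contradiction using Honda's theorem in the same way as in the proof of assertion $2)$: assuming $u=3$ but $\left(\frac{3}{p}\right)_3=1$, the contrapositive of Lemma \ref{lem:symb and u} forces $3\nmid|C_{L,3}|$ (since $u=3$); but by \cite[\S\ 1, Thm, p. 8]{HONDA} this would imply $p\equiv -1\,(\bmod\,3)$, contradicting $p\equiv 4,7\,(\bmod\,9)$.

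The only delicate point — a mild one — is the assertion that $3\parallel|C_{L,3}|$ genuinely holds for \emph{every} $p\equiv 4,7\,(\bmod\,9)$, not merely $3\mid|C_{L,3}|$; this is precisely the content of part $1)$ of Conjecture \ref{Conj1} already established above, so no additional work is required, and the corollary follows formally.
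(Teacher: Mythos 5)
Your derivation is essentially the intended one: the paper gives no separate argument for Corollary \ref{Corollary} (it is stated as an immediate consequence of Lemmas \ref{NewLemma} and \ref{lem:symb and u}), and the natural way to extract it is exactly what you do — the forward implication is read off from Lemma \ref{lem:symb and u}, and for the converse the side condition $3\parallel|C_{L,3}|$ is supplied unconditionally by assertion $1)$ of the theorem just proved, so it may be dropped. As pure logic your two-step argument is fine.

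The caveat, and it is a substantial one, is that the orientation of $u$ in the printed statements is inconsistent with the rest of the paper, so your faithful modus ponens on Lemma \ref{lem:symb and u} as printed ends up establishing an equivalence that the paper itself contradicts elsewhere. In the proof of assertion $2)$ the same hypothesis $\left(\frac{3}{p}\right)_3\neq 1$ is said, via Lemma \ref{lem:symb and u}, to give $u=1$ (and then Lemma \ref{33} yields $C_{\mathrm{k},3}\simeq\mathbb{Z}/3\mathbb{Z}$); the numerical tables point the same way (all primes with $\left(\frac{3}{p}\right)_3\neq1$, e.g.\ $p=7,13,31,\dots$, have $u=1$, while those with $\left(\frac{3}{p}\right)_3=1$, e.g.\ $p=61,67,\dots$, have $u=3$); and the proof of Proposition \ref{Conj22}(ii) invokes the Corollary in the form $\left(\frac{3}{p}\right)_3=1\Rightarrow u=3$. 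Indeed the relation $|C_{\mathrm{k},3}|=\frac{u}{3}|C_{L,3}|^2$ of \cite{B-C} together with assertion $2)$ forces $u=1$ when the symbol is $\neq1$ (check $p=7$: $h_{L}=3$, $|C_{\mathrm{k},3}|=3$, so $u=1$), so the printed ``$u=3$'' in Lemma \ref{lem:symb and u} is a slip for ``$u=1$'', and the correct Corollary reads $\left(\frac{3}{p}\right)_3\neq1\Leftrightarrow u=1$, equivalently $\left(\frac{3}{p}\right)_3=1\Leftrightarrow u=3$. Your skeleton (drop the condition $3\parallel|C_{L,3}|$ using part $1)$) transfers verbatim once the Lemma is quoted with the right value of $u$, so nothing structural is lost — but a proof should flag the discrepancy rather than inherit it. One further small point: in your Honda-based alternative, the contrapositive of Lemma \ref{lem:symb and u} under $u=3$ only gives the negation of ``$3\parallel|C_{L,3}|$'', i.e.\ $3\nmid|C_{L,3}|$ or $9\mid|C_{L,3}|$; ruling out the second branch again requires assertion $1)$, so that variant is not genuinely independent of your first argument.
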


\begin{proposition}
\label{Conj22}
Let $\mathrm{k}=\mathbb{Q}(\sqrt[3]{p},\zeta_3)$,
where $p$ is a prime number such that $p\equiv 4~or~7~(\bmod~9)$.
Let $I$, $\mathcal{P}$ and $\mathcal{Q}$ be the prime ideals 
defined in the proof of Lemma \ref{NewLemma}. Then:
\begin{itemize}
\item[(i)] The unique prime ideal $\mathcal{P}_0$ above $p$ in $\mathbb{Q}(\sqrt[3]{p})$ is principal independently of the value of $\left(\frac{3}{p}\right)_3$. 
\item[(ii)] $\left(\frac{3}{p} \right)_3 = 1$ if and only if $I$ is principal and $\mathcal{P}$ (resp. $\mathcal{Q}$) is not principal.
\item[(iii)] $\left(\frac{3}{p} \right)_3 \neq 1$ if and only if $I$ is not principal and $\mathcal{P}$ (resp. $\mathcal{Q}$) is principal.
\end{itemize}
where $\left(\frac{.}{p} \right)_3$ is the cubic residue symbol.
\end{proposition}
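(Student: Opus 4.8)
The plan is to obtain all three items as consequences of facts already recorded for the case $p\equiv 4,7\pmod 9$: the prime $3$ ramifies in $L$, so the primes ramified in $\mathrm{k}/\mathrm{k}_0$ are exactly $(1-\zeta_3)$, $(\pi_1)$, $(\pi_2)$, with $I$, $\mathcal{P}$, $\mathcal{Q}$ the prime ideals of $\mathrm{k}$ above them and $\mathcal{Q}=\mathcal{P}^{\tau}$; moreover $\vert C_{\mathrm{k},3}^{(\sigma)}\vert=3$ by Lemma \ref{11}, $\zeta_3$ is not a norm in $\mathrm{k}/\mathrm{k}_0$, and hence $S_{\mathrm{k},3}^{(\sigma)}=C_{\mathrm{k},3}^{(\sigma)}$ is generated by $[I]$, $[\mathcal{P}]$, $[\mathcal{Q}]$ --- exactly as in the proof of Lemma \ref{NewLemma}. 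Item (i) I would dispose of immediately and independently: since $d=p$ is cube-free with $p\mid d$, the prime $p$ is totally ramified in $L$ ($X^3-p$ is Eisenstein at $p$), so $p\mathcal{O}_L=\mathcal{P}_0^{3}$ for a unique prime $\mathcal{P}_0$; comparing with $(\sqrt[3]{p})^{3}=p\mathcal{O}_L$ and invoking unique factorisation of ideals gives $\mathcal{P}_0=(\sqrt[3]{p})$, which is principal, with no reference to the cubic symbol.

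For (ii) and (iii) the first step is to connect $\left(\tfrac{3}{p}\right)_3$ with the principality of $\mathcal{P}$. I would combine the already-proved part 2) of Conjecture \ref{Conj1} with Lemma \ref{33} (and with the fact $\vert C_{L,3}\vert=3$, which is part 1 of the same conjecture, so that $\vert C_{\mathrm{k},3}\vert=9$) to obtain, for $p\equiv 4,7\pmod 9$,
\[
\left(\tfrac{3}{p}\right)_3=1\ \Longleftrightarrow\ C_{\mathrm{k},3}\simeq(\mathbb{Z}/3\mathbb{Z})^2\ \Longleftrightarrow\ u=3,
\]
and then Lemma \ref{NewLemma} turns $u=3$ into ``$\mathcal{P}$ is not principal''; equivalently, $\left(\tfrac{3}{p}\right)_3\neq 1$ is equivalent to ``$\mathcal{P}$ (hence $\mathcal{Q}=\mathcal{P}^{\tau}$) is principal''.

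The remaining, and only genuinely new, point is that ``$\mathcal{P}$ not principal $\Longleftrightarrow I$ principal'', which I would prove by tracking the $\tau$-action on the three ramified primes. Since $\tau(1-\zeta_3)=1-\zeta_3^2=(1-\zeta_3)(1+\zeta_3)=-\zeta_3^2(1-\zeta_3)$ differs from $1-\zeta_3$ only by a unit, the ideal $(1-\zeta_3)$ is $\tau$-stable, hence so is its unique prime divisor $I$ in $\mathrm{k}$, and therefore $[I]\in(C_{\mathrm{k},3}^{(\sigma)})^{+}$; on the other hand $\mathcal{P}\mathcal{Q}=(\sqrt[3]{p})$ is principal, so $[\mathcal{P}]^{\tau}=[\mathcal{Q}]=[\mathcal{P}]^{-1}$ and $[\mathcal{P}]\in(C_{\mathrm{k},3}^{(\sigma)})^{-}$. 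Because $C_{\mathrm{k},3}^{(\sigma)}=(C_{\mathrm{k},3}^{(\sigma)})^{+}\times(C_{\mathrm{k},3}^{(\sigma)})^{-}$ has order $3$, exactly one of these two eigenspaces is trivial. If $\mathcal{P}$ is not principal, then $[\mathcal{P}]\neq 1$ generates $(C_{\mathrm{k},3}^{(\sigma)})^{-}=C_{\mathrm{k},3}^{(\sigma)}$, forcing $(C_{\mathrm{k},3}^{(\sigma)})^{+}=\{1\}$ and hence $[I]=1$, i.e. $I$ principal; conversely, if $\mathcal{P}$ and $\mathcal{Q}$ are principal, then $C_{\mathrm{k},3}^{(\sigma)}=\langle[I],[\mathcal{P}],[\mathcal{Q}]\rangle=\langle[I]\rangle$ still has order $3$, so $[I]\neq 1$ and $I$ is not principal. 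Feeding this equivalence into the previous step yields (ii) and (iii) at once (and ``$\mathcal{P}$'' may be replaced by ``$\mathcal{Q}=\mathcal{P}^{\tau}$'' since the conjugate of a principal ideal is principal).

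Because the proposition is essentially a repackaging of the earlier lemmas, I do not expect a serious obstacle. The only place where care is genuinely needed is the last paragraph: one must be certain of the two structural inputs --- that $C_{\mathrm{k},3}^{(\sigma)}=S_{\mathrm{k},3}^{(\sigma)}$ is generated by the classes of the ramified primes (which rests on $q^{*}=0$, as in the proof of Lemma \ref{11}) and of the $\tau$-eigenspace decomposition of $C_{\mathrm{k},3}^{(\sigma)}$ from \cite[\S\ 2, Lemma 2.1]{GERTH2} --- and one must get the $\tau$-action on $1-\zeta_3$ and on $\mathcal{P},\mathcal{Q}$ correct; these computations are routine but error-prone.
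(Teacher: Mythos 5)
Your proof is correct and follows essentially the same route as the paper: (i) via $\mathcal{P}_0=\mathcal{P}\mathcal{Q}=(\sqrt[3]{p})$, and (ii)--(iii) by converting $\left(\frac{3}{p}\right)_3$ into the value of $u$, then into the (non-)principality of $\mathcal{P}$ via Lemma \ref{NewLemma}, and finally reading off $[I]$ from $\vert C_{\mathrm{k},3}^{(\sigma)}\vert=3$ together with the fact that $C_{\mathrm{k},3}^{(\sigma)}=S_{\mathrm{k},3}^{(\sigma)}$ is generated by the classes of the ramified primes. Your explicit $\tau$-eigenspace argument just spells out the step the paper states tersely (``Since $\vert C_{\mathrm{k},3}^{(\sigma)}\vert=3$, then $I$ is (not) principal''), and the equivalence $\left(\frac{3}{p}\right)_3=1\Leftrightarrow u=3$ you derive from part 2) of the conjecture and Lemma \ref{33} is exactly what the paper's proof actually uses, correctly sidestepping the sign typo in the printed statements of Lemma \ref{lem:symb and u} and Corollary \ref{Corollary}.
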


The Proposition \ref{Conj22} will be underpinned by numerical examples
obtained with the computational number theory system PARI \cite{PARI}
in \S\ \ref{Appendix}.

\begin{proof}
Since $p\equiv 1 \ (\bmod \ 3$), then according to \cite[\S\ 9, Section 1, prop. 9.1.4, p.110]{Clas}, $p=\pi_1\pi_2$,
where $\pi_1$ and $\pi_2$ are two primes of $\mathrm{k}_{0}$ such that
$\pi_2=\pi_1^{\tau}$ and $\pi_1 \equiv \pi_2 \equiv 1 ~(\bmod~3\mathcal{O}_{\mathrm{k}_0})$.
As $p \equiv 4 \ \text{or} \ 7 (\bmod \ 9)$, then $3$ is ramified in $L$ by \cite[\S\ 4, pp. 51-55]{DED},
so the primes ramified in $\mathrm{k} / \mathrm{k}_{0}$ are $(1-\zeta_3)$, $\pi_1$ and $\pi_2$.
Put $(\pi_1)= \mathcal{P}^{3}, \ (\pi_2)=\mathcal{Q}^{3}$ and $(1-\zeta_3)=I^{3}$. \\
The fact that $p \equiv 4\ \text{or} \ 7 \ (\bmod \ 9)$ implies that $S_{\mathrm{k},3}^{(\sigma)}=C_{\mathrm{k},3}^{(\sigma)}$, where $S_{\mathrm{k},3}^{(\sigma)}$ and $C_{\mathrm{k},3}^{(\sigma)}$ are defined in the proof of Lemma  \ref{NewLemma}.
Then $C_{\mathrm{k},3}^{(\sigma)}$ is generated by the ideal classes of the primes ramified in $\mathrm{k} / \mathrm{k}_{0}$. 

$\\ (i)$ Let $\mathcal{P}_0$ be the unique prime ideal above $p$ in $\mathbb{Q}(\sqrt[3]{p})$, we have $p\mathcal{O}_{L}=\mathcal{P}_{0}^{3}$, and since $p\mathcal{O}_{k}=\mathcal{P}^{3}\mathcal{Q}^{3}$, then the ideal $\mathcal{P}_{0}=\mathcal{P}\mathcal{Q}=(\sqrt[3]{p})$ is principal.

$\\ (ii)$
Assume that  $\left(\frac{3}{p} \right)_3 = 1$, then according to Corollary \ref{Corollary} we have $u=3$, and by Lemma \ref{NewLemma} the prime ideal $\mathcal{P}$ is not principal, so  $\mathcal{Q}=\mathcal{P}^{\tau}$ is also not principal. Since $\vert C_{k,3}^{(\sigma)}\vert=3$ by Lemma \ref{11},  then $I$ is principal.

$\\ (iii)$ Reasoning as in $(ii)$. Assume that  $\left(\frac{3}{p} \right)_3 \neq 1$, then according to Corollary \ref{Corollary} we have $u=3$, and by Lemma \ref{NewLemma} the ideal $\mathcal{P}$ is principal, so  $\mathcal{Q}=\mathcal{P}^{\tau}$ is also principal. Since $\vert C_{k,3}^{(\sigma)}\vert=3$ by Lemma \ref{11},  then $I$ is not principal.

\end{proof}

\begin{remark}
Let $L=\mathbb{Q}(\sqrt[3]{d})$, where $d>1$ is a cube-free integer,
let $\mathrm{k}=\mathbb{Q}(\sqrt[3]{d},\zeta_3)$ be the normal closure of the pure cubic field $L$
and $C_{L,3}$ (resp. $C_{\mathrm{k},3}$) be the $3$-component of the class group of $ L$ (resp. $\mathrm{k}$).
$\\ 1)~$ If $|C_{L,3}| = 3$, then $\rank   C_{\mathrm{k},3} \leq 2$.
$\\ 2)~$ If $|C_{L,3}| = 9$, then $\rank
C_{\mathrm{k},3} \leq 3$ if $u=1$, and $\rank  C_{\mathrm{k},3} \leq
4$ otherwise.
$\\ 3)~$ If $d=p$ or $p^2$, with $p$ is a prime number such that $p \equiv 1 \pmod 9$, and if $9$ divide exactly the class number of $\mathbb{Q}(\sqrt[3]{p})$ and $u=1$, then according to \cite[\S\ 1, Theorem 1.1, p. 1]{CaractArXiv}, the $3$-class group of $\mathbb{Q}(\sqrt[3]{p},\zeta_3)$ is of type $(9,3).$ Furthermore, if $3$ is not residue cubic modulo $p$, then a generators of $3$-class group of $\mathbb{Q}(\sqrt[3]{p},\zeta_3)$ can be deduced by \cite[\S\ 3, Theorem 3.2, p. 10]{GENArXiv}.
\end{remark}

\section{Appendix}\label{Appendix}
\subsection{Illustrations of Conjecture \ref{Conj1}}
\label{NumExp}
Let $p\equiv 1\,(\mathrm{mod}\,3)$ be a prime, $L=\mathbb{Q}(\sqrt[3]{p})$, and $\mathrm{k}=\mathbb{Q}(\sqrt[3]{p},\zeta_3)$.  
Let $u$ be the index of subfield units defined as above, $C_{L,3}$ (resp. $C_{\mathrm{k},3}$) be the $3$-class group of $L$ (resp. $\mathrm{k}$). 
\begin{center}
Table 1: Some numerical examples for  Conjecture  \ref{Conj1}.

\end{center}
\begin{longtable}{| c | c | c | c | c | c | c | c | c | c | c | c | c |  }
\hline
    $p$ & $p ~(\bmod~ 9)$  & $u$ & $(\frac{3}{p})_3$ & $C_{L,3}$ & $ \rank  C_{\mathrm{k},3}$ \\
\hline
\endfirsthead
\hline
    $p$  & $p ~(\bmod~ 9)$  & $u$ & $(\frac{3}{p})_3$ & $C_{L,3}$ & $ \rank  C_{\mathrm{k},3}$ \\
\hline
\endhead
   $199$ & $1$  & $1$ & & $[9]$ & $2$   \\

   $211$ & $4$  & $1$ & $\neq 1$ & $[3]$ & $1$   \\

   $223$  & $7$  & $1$ & $\neq 1$ & $[3]$ & $1$   \\

   $367$  & $7$  & $3$ & $1$ & $[3]$ & $2$   \\

   $499$  & $4$  & $3$ & $1$ & $[3]$ & $2$   \\

   $541$  & $1$  & $3$ & & $[9]$ & $2$   \\

\hline

\end{longtable}
Moroever, in  Section $17$ of \cite[Numerical Data, p. 238]{B-C}, and also in the tables of \cite{TABdata} which give  the class number of a pure cubic field, the prime numbers $p=61, 67, 103,$ and $151$, which are all congruous to $4 $ or $ 7 \ \pmod 9$, verify the following  properties:
\begin{itemize}
\item[$i)$] $3$ is a residue cubic modulo $p$;
\item[$ii)$] $3$ divide exactly the class number of  $L$;
\item[$iii)$]  $u=3$;
\item[$iv)$] $C_{L,3} \simeq \mathbb{Z}/3 \mathbb{Z}$, and $C_{\mathrm{k},3}\simeq \mathbb{Z}/3 \mathbb{Z}\times \mathbb{Z}/3 \mathbb{Z}.$
\end{itemize}

\subsection{Illustrations of Proposition \ref{Conj22} and Corollary \ref{Corollary} } \label{conj22App}
Let $L=\mathbb{Q}(\sqrt[3]{p})$, and $\mathrm{k}=\mathbb{Q}(\sqrt[3]{p},\zeta_3)$,
where $p$ is a prime such that $p\equiv 4~or~7~(\bmod~9)$.
We put $3 \mathcal{O}_{L}=I_0^3$,  $p\mathcal{O}_{L}=\mathcal{P}_0^3$, $3 \mathcal{O}_{\mathrm{k}}=I^6$, and  $p \mathcal{O}_{\mathrm{k}}=\mathcal{P}^3\mathcal{Q}^3$. \\

\begin{center}
Table 2: Case where  $p \equiv 4 \ or \ 7 \pmod 9$ and $(\frac{3}{p})_3=1$.
\end{center}

\begin{longtable}{| c | c | c | c | c | c | c | c | c |  }
\hline
   $p$ & $u$ & $(\frac{3}{p})_3$  & $C_{L,3}$ & $C_{\mathrm{k},3}$ & $I_0$ & $I$ & $\mathcal{P}_0$ & $\mathcal{P}$      \\
\hline
\endfirsthead
\hline
  $p$ & $u$ & $(\frac{3}{p})_3$  & $C_{L,3}$ & $C_{\mathrm{k},3}$ & $I_0$ & $I$ & $\mathcal{P}_0$ & $\mathcal{P}$  \\
\hline
\endhead
   $61$ & $3$ & $1$ & $[3]$ & $[3, 3]$ & $[0]~$   & $[0]~$  & $[0]~$ & $\neq[0]~$  \\
   $67$ & $3$ & $1$ & $[3]$ & $[3, 3]$ & $[0]~$   & $[0]~$  & $[0]~$ & $\neq[0]~$  \\
   $103$ & $3$ & $1$ & $[3]$ & $[3, 3]$ & $[0]~$   & $[0]~$  & $[0]~$ & $\neq[0]~$  \\
   $151$ & $3$ & $1$ & $[3]$ & $[3, 3]$ & $[0]~$   & $[0]~$  & $[0]~$ & $\neq[0]~$  \\
   $193$ & $3$ & $1$ & $[3]$ & $[3, 3]$ & $[0]~$   & $[0]~$  & $[0]~$ & $\neq[0]~$  \\
   $367$ & $3$ & $1$ & $[3]$ & $[3, 3]$ & $[0]~$   & $[0]~$  & $[0]~$ & $\neq[0]~$  \\
   $439$ & $3$ & $1$ & $[3]$ & $[3, 3]$ & $[0]~$   & $[0]~$  & $[0]~$ & $\neq[0]~$  \\
   $499$ & $3$ & $1$ & $[3]$ & $[3, 3]$ & $[0]~$   & $[0]~$  & $[0]~$ & $\neq[0]~$  \\
   $547$ & $3$ & $1$ & $[3]$ & $[3, 3]$ & $[0]~$   & $[0]~$  & $[0]~$ & $\neq[0]~$  \\
   $619$ & $3$ & $1$ & $[3]$ & $[3, 3]$ & $[0]~$   & $[0]~$  & $[0]~$ & $\neq[0]~$  \\
   $643$ & $3$ & $1$ & $[3]$ & $[3, 3]$ & $[0]~$   & $[0]~$  & $[0]~$ & $\neq[0]~$  \\
   $661$ & $3$ & $1$ & $[3]$ & $[3, 3]$ & $[0]~$   & $[0]~$  & $[0]~$ & $\neq[0]~$  \\
   $727$ & $3$ & $1$ & $[3]$ & $[3, 3]$ & $[0]~$   & $[0]~$  & $[0]~$ & $\neq[0]~$  \\
   $787$ & $3$ & $1$ & $[3]$ & $[3, 3]$ & $[0]~$   & $[0]~$  & $[0]~$ & $\neq[0]~$  \\
   $853$ & $3$ & $1$ & $[3]$ & $[3, 3]$ & $[0]~$   & $[0]~$  & $[0]~$ & $\neq[0]~$  \\
   $967$ & $3$ & $1$ & $[3]$ & $[3, 3]$ & $[0]~$   & $[0]~$  & $[0]~$ & $\neq[0]~$  \\
   $997$ & $3$ & $1$ & $[3]$ & $[3, 3]$ & $[0]~$   & $[0]~$  & $[0]~$ & $\neq[0]~$  \\
\hline
\end{longtable}

\begin{center}
Table 3: Case where $p \equiv 4 \ or \ 7 \pmod 9$ and $(\frac{3}{p})_3\neq 1$.
\end{center}
\begin{longtable}{| c | c | c | c | c | c | c | c | c |  }
\hline
   $p$ & $u$ & $(\frac{3}{p})_3$ & $C_{L,3}$ & $C_{\mathrm{k},3}$ & $I_0$ & $I$ & $\mathcal{P}_0$ & $\mathcal{P}$      \\
\hline
\endfirsthead
\hline
  $p$ & $u$ & $(\frac{3}{p})_3$  & $C_{L,3}$ & $C_{\mathrm{k},3}$ & $I_0$ & $I$ & $\mathcal{P}_0$ & $\mathcal{P}$  \\
\hline
\endhead
   $7$ & $1$ & $\neq 1$ & $[3]$ & $[3]$ & $\neq[0]~$   & $\neq[0]~$  & $[0]~$ & $[0]~$  \\
   $13$ & $1$ & $\neq 1$ & $[3]$ & $[3]$ & $\neq[0]~$   & $\neq[0]~$  & $[0]~$ & $[0]~$  \\
   $31$ & $1$ & $\neq 1$ & $[3]$ & $[3]$ & $\neq[0]~$   & $\neq[0]~$  & $[0]~$ & $[0]~$  \\
   $43$ & $1$ & $\neq 1$ & $[3]$ & $[3]$ & $\neq[0]~$   & $\neq[0]~$  & $[0]~$ & $[0]~$  \\
   $79$ & $1$ & $\neq 1$ & $[3]$ & $[3]$ & $\neq[0]~$   & $\neq[0]~$  & $[0]~$ & $[0]~$  \\
   $97$ & $1$ & $\neq 1$ & $[3]$ & $[3]$ & $\neq[0]~$   & $\neq[0]~$  & $[0]~$ & $[0]~$  \\
   $139$ & $1$ & $\neq 1$ & $[3]$ & $[3]$ & $\neq[0]~$   & $\neq[0]~$  & $[0]~$ & $[0]~$  \\
   $157$ & $1$ & $\neq 1$ & $[3]$ & $[3]$ & $\neq[0]~$   & $\neq[0]~$  & $[0]~$ & $[0]~$  \\
   $211$ & $1$ & $\neq 1$ & $[3]$ & $[3]$ & $\neq[0]~$   & $\neq[0]~$  & $[0]~$ & $[0]~$  \\
   $223$ & $1$ & $\neq 1$ & $[3]$ & $[3]$ & $\neq[0]~$   & $\neq[0]~$  & $[0]~$ & $[0]~$  \\
   $229$ & $1$ & $\neq 1$ & $[3]$ & $[3]$ & $\neq[0]~$   & $\neq[0]~$  & $[0]~$ & $[0]~$  \\
   $241$ & $1$ & $\neq 1$ & $[3]$ & $[3]$ & $\neq[0]~$   & $\neq[0]~$  & $[0]~$ & $[0]~$  \\
   $277$ & $1$ & $\neq 1$ & $[3]$ & $[3]$ & $\neq[0]~$   & $\neq[0]~$  & $[0]~$ & $[0]~$  \\
   $283$ & $1$ & $\neq 1$ & $[3]$ & $[3]$ & $\neq[0]~$   & $\neq[0]~$  & $[0]~$ & $[0]~$  \\
   $313$ & $1$ & $\neq 1$ & $[3]$ & $[3]$ & $\neq[0]~$   & $\neq[0]~$  & $[0]~$ & $[0]~$  \\
   $331$ & $1$ & $\neq 1$ & $[3]$ & $[3]$ & $\neq[0]~$   & $\neq[0]~$  & $[0]~$ & $[0]~$  \\
\hline
\end{longtable}


\section{Acknowledgements}
\label{s:Thanks}

\noindent
The authors would like to thank Professor Daniel C. Mayer who was of a great help concerning correcting the spelling mistakes and making some precious modifications 
that gave more value and meaning to the work.


\end{document}